\newtheorem{theorem}{Theorem}[section]
\newtheorem{corollary}[theorem]{Corollary}
\newtheorem{lemma}[theorem]{Lemma}
\newtheorem{remark}[theorem]{Remark}
\newtheorem{conclusion}[theorem]{Conclusion}
\newcommand\g{{\mathfrak g}}
\newcommand{\R}{\mathbb{R}}
\begin{document}

{\bf \large
\centerline{N.~K.~Smolentsev}

\vspace{3mm}
\centerline{Almost complex and almost para-complex Cayley structures }
\centerline{on six-dimensional pseudo-Riemannian spheres}}
\vspace{3mm}

\begin{abstract}
In this paper we study almost complex and almost para-complex Cayley structures on six-dimensional pseudo-Riemannian spheres in the space of purely imaginary octaves of the split Cayley algebra $\mathbf{Ca}'$. It is shown that the Cayley structures are non-integrable, their basic geometric characteristics are calculated. In contrast to the usual Riemann sphere $\mathbb{S}^6$, there exist (integrable) complex structures and para-complex structures on the pseudospheres under consideration.
\end{abstract}

\section{Introduction} \label{Introduction}
Almost complex structures on the usual six-dimensional sphere $\mathbb{S}^6$ have been studied actively for a long time  (see, for example,
the bibliography and a historical review  in \cite{Agricola-18-1},  \cite{Agricola-18-2}).
However, the  question about existence of the integrable almost complex structures on $\mathbb{S}^6$ has not been solved so far.
Among orthogonal almost complex structures $J$ on $\mathbb{S}^6$, an almost-complex Cayley structure $J_0$ takes on a special place.
It is obtained by means of the vector product in the ambient space $\mathbb{R}^7$ of the purely imaginary Cayley octaves.
The Cayley structure $J_0$ is invariant with respect to the action of the compact exceptional group $G_2$.
A detailed investigation of the Cayley structure $J_0$ on $\mathbb{S}^6$ was done in \cite{Smolen-9}.

As is known, there exists a split Cayley algebra $\mathbf{Ca'}$, which is obtained from quaternions by the Cayley-Dickson construction using an ''imaginary'' number $e$ such that $e^2 = +1$.
The Cayley algebra $\mathbf{Ca'}$ has a pseudo-Euclidean scalar product of signature (4,4) defined by the quadratic form
$x\overline{x}=x_0^2+x_1^2+x_2^2+x_3^2-x_4^2-x_5^2-x_6^2-x_7^2$. The automorphism group of the algebra $\mathbf{Ca'}$ is a noncompact exceptional group $G_2^*$.
The space of purely imaginary split octonions inherits the quadratic form $g$ of the signature (3,4) and it is therefore the pseudo-Euclidean space $\mathbb{R}^{3,4}$.
There are two types of spheres in the space $\mathbb{R}^{3,4}$: of real and imaginary radius.
A pseudo-sphere $\mathbb{S}^{2,4}$ of real radius is a homogeneous pseudo-Riemannian manifold $\mathbb{S}^{2,4}=G_2^*/SU(1,2)$ of signature (2,4).
The scalar squares of the normal vectors to $\mathbb{S}^{2,4}$ are positive.
A pseudo-sphere $\mathbb{S}^{3,3}(i)$ of imaginary radius is a homogeneous pseudo-Riemannian manifold $\mathbb{S}^{3,3}=G_2^*/SL(3,\mathbb{R})$  of signature (3,3).
Scalar squares of the normal vectors to $\mathbb{S}^{3,3}(i)$ are negative.

The multiplication of the purely imaginary octaves defines $G_2^*$-invariant vector cross product in the space $\mathbb{R}^{3,4}$.
This allows us to define the orthogonal almost complex Cayley structure $J$ on the sphere $\mathbb{S}^{2,4}\subset \mathbb{R}^{3,4}$
by multiplying the tangent vectors on the normal vector.

In this paper it is shown that the Cayley structure is nonintegrable and its Nijenhuis tensor is calculated through.
An expression of the fundamental 2-form $\omega$ of the almost Hermitian structure on $\mathbb{S}^{2,4}$ is found,
and it is shown that the 2-form $\omega$ is proper for the Laplace operator. In contrast to the usual Riemannian sphere $\mathbb{S}^{6}$,
there are integrable almost complex structures on $\mathbb{S}^{2,4}$.
On the sphere $\mathbb{S}^{3,3}\subset \mathbb{R}^{3,4}$ of imaginary radius, the vector cross product in $\mathbb{R}^{3,4}$
defines an almost para-complex structure $P$.
It is shown that the Cayley structure $P$ is non-integrable and the Nijenhuis tensor is calculated.
The expression of the fundamental 2-form $\omega$ is found on $\mathbb{S}^{3,3}(i)$ and it is shown that $\omega$ is a proper 2-form of the Laplace operator.
In contrast to the usual Riemannian sphere $\mathbb{S}^{6}$, there are integrable almost para-complex structures on $\mathbb{S}^{3,3}(i)$,
which are easily constructed using stereographic projection.

\section{Preliminaries} \label{Preface}
\subsection{Cayley algebras} \label{Cayley algebras}
Let $\mathbb{H}$ be the quaternion algebra consisting of the numbers $w = x_0 +x_1e_1  +x_2e_2  +x_3e_3$. The algebra $\mathbf{Ca}$
of Cayley numbers is obtained by the Cayley-Dickson construction.
We introduce one more imaginary unit $e$ and form Cayley numbers in the form $x = a+be$, where $a$ and $b$ are quaternions.
Multiplication of such numbers is determined by the formula
$xy =(a+be)(c+de)=(ac-\overline{d}b)+(da+b\overline{c})e.$
The group of automorphisms of the Cayley algebra $\mathbf{Ca}$ is a simple exceptional compact Lie group $G_2$.

Another version of the Cayley-Dickson procedure is known. In this case, an additional ''unit'' $e$ has the property $e^2 = +1$.
Then we obtain, so-called, split-octonions in the form $x = a+be$, where $a$ and $b$ are quaternions. Multiplication of such numbers is determined by the
formula
$$
xy =(a+be)(c+de)=(ac+\overline{d}b)+(da+b\overline{c})e.
$$
As a result, we obtain a non-associative algebra $\mathbf{Ca'}$, which is called the split Cayley algebra.
We introduce the following notation: $e_4 = e$,  $e_5 = e_1e$,  $e_6 = e_2e$, $e_7 = e_3e$. Note that $e_i^2 = -1$ for $i = 1, 2, 3$, and $e_j^2 = +1$ for $j = 4, 5, 6, 7$. Each Cayley split-number is written as $x = x_0 + x_1 e_1 + \dots + x_7e_7$, where $x_\alpha \in \R$, and $e_1, e_2, \dots , e_7$ are imaginary units.
The following rules for their multiplication hold (the first factor in the column on the left, and the second factor in the row above):
%\centerline{\begin{tabular}{|r|r|r|r|r|r|r|r|}
%  \hline
%  % after \\: \hline or \cline{col1-col2} \cline{col3-col4} ...
%       & $e_1$  & $e_2$  & $e_3$  & $e_4$ & $e_5$  & $e_6$  & $e_7$ \\ \hline
% $e_1$ & $-1$   & $e_3$  & $-e_2$ & $e_5$ & $-e_4$ & $-e_7$ & $e_6$ \\ \hline
% $e_2$ & $-e_3$ & $-1$   & $e_1$  & $e_6$ & $e_7$  & $-e_4$ & $-e_5$ \\ \hline
% $e_3$ & $e_2$  & $-e_1$ & $-1$   & $e_7$ & $-e_6$ & $e_5$  & $-e_4$ \\ \hline
% $e_4$ & $-e_5$ & $-e_6$ & $-e_7$ & $1$   & $-e_1$ & $-e_2$ & $-e_3$ \\ \hline
% $e_5$ & $e_4$  & $-e_7$ & $e_6$  & $e_1$ & $1$    & $e_3$  & $-e_2$ \\ \hline
% $e_6$ & $e_7$  & $e_4$  & $-e_5$ & $e_2$ & $-e_3$ & $1$    & $e_1$ \\ \hline
% $e_7$ & $-e_6$ & $e_5$  & $e_4$  & $e_3$ & $e_2$  & $-e_1$ & $1$ \\
%  \hline
%\end{tabular}}

$$
\begin{array}{r|rrrrrrrr}
\cdot& e_1  & e_2  & e_3  & e_4 & e_5  & e_6  & e_7 \\
  \hline
 e_1 & -1   & e_3  & -e_2 & e_5 & -e_4 & -e_7 & e_6 \\
 e_2 & -e_3 & -1   & e_1  & e_6 & e_7  & -e_4 & -e_5 \\
 e_3 & e_2  & -e_1 & -1   & e_7 & -e_6 & e_5  & -e_4 \\
 e_4 & -e_5 & -e_6 & -e_7 & 1   & -e_1 & -e_2 & -e_3 \\
 e_5 & e_4  & -e_7 & e_6  & e_1 & 1    & e_3  & -e_2 \\
 e_6 & e_7  & e_4  & -e_5 & e_2 & -e_3 & 1    & e_1 \\
 e_7 & -e_6 & e_5  & e_4  & e_3 & e_2  & -e_1 & 1 \\
\end{array}
$$
%$$
%\begin{array}{r|rrrrrrrr}
%  \cdot & e_1 & e_2 & e_3 & e_4 & e_5 & e_6 & e_7 \\
%  \hline
%  e_1 &  -1  & e_3 & -e_2 &  e_5 & -e_4 & -e_7 &  e_6 \\
%  e_2 & -e_3 & -1  &  e_1 &  e_6 &  e_7 & -e_4 & -e_5 \\
%  e_3 &  e_2 &-e_1 &  -1  &  e_7 & -e_6 &  e_5 & -e_4 \\
%  e_4 & -e_5 &-e_6 & -e_7 &  -1  &  e_1 &  e_2 &  e_3 \\
%  e_5 &  e_4 &-e_7 &  e_6 & -e_1 &  -1  & -e_3 &  e_2 \\
%  e_6 &  e_7 & e_4 & -e_5 & -e_2 &  e_3 &  -1  & -e_1 \\
%  e_7 & -e_6 & e_5 &  e_4 & -e_3& -e_2  &  e_1 &  -1  \\
%\end{array}
%$$

We recall the basic properties of the algebra $\mathbf{Ca'}$, for more details about this see \cite{Zhevlakov}.
The conjugation of split octonions is given in the usual way, $\overline{x} = x_0 -x_1e_1 -\dots -x_7e_7$, and has the property $\overline{xy}=\overline{y}\,\overline{x}$. The Cayley algebra $\mathbf{Ca'}$ has the quadratic form $N(x)=x\overline{x}=x_0^2+x_1^2+x_2^2+x_3^2-x_4^2-x_5^2-x_6^2-x_7^2$
and the appropriate pseudo-Euclidean scalar product of signature (4,4).
Therefore, the algebra $\mathbf{Ca'}$ will often be regarded as a pseudo-Euclidean space $\R^{4,4}$.
The algebra $\mathbf{Ca'}$ is composition algebra \cite{Zhevlakov}, since for it the equality $N(xy,xy) = N(x,x)N(y,y)$.

The algebra $\mathbf{Ca'}$ is nonassociative, i.e. $(xy)z\neq x(yz)$. The \emph{associator} is the expression $[x,y,z] = (xy)z -x(yz)$.
The split algebra $\mathbf{Ca'}$ is an alternative, since the following properties hold:
$$
(xx)y = x(xy),\quad x(yy) = (xy)y, \quad \forall x,y \in \mathbf{Ca'}.
$$
This name is given to such algebras because the associator is skew-symmetric (alternative) in all arguments. In particular, the property of alternativeity is written through the associator in the following way: $[x,x,y] = 0$, $[x, y, y] = 0$.
We note some other properties of the algebra $\mathbf{Ca'}$:
\begin{itemize}
  \item $x(yx) = (xy)x$,
  \item $(x\overline{x})y=x(\overline{x}y)$, $x(\overline{y}y) = (x\overline{y})y$,
  \item $(xxy)x = xx(yx)$, $[xx, y, x] = 0$,
  \item $((xy)z)y = x(yzy)$, $(xyx)z = x(y(xz))$, $(xy)(zx) =x(yz)x$.
\end{itemize}

\subsection{Group $G_2^*$} \label{Group G2}
The group $G_2$ is one of the complex exceptional simple Lie groups \cite{Yokota-09}.
It has two real forms. One of them, compact, is denoted by the symbols $G_2^c$ or $G_2$ and is well known. The second, noncompact real form is denoted as $G_2^*$, its description can be found in \cite{Yokota77}. This group $G_2^*$ can be defined as the automorphism group of the split algebra $\mathbf{Ca'}$, or as the stabilizer of some general 3-form on the pseudo-Euclidean vector space $(V,g)$ of the signature (3,4), or as the stabilizer of a vector cross product on the seven-dimensional pseudo-Euclidean space $(V,g)$ of the signature (3,4). Therefore, $G_2^*\subset SO(4,3)$.

In the pseudo-Euclidean space $V$ we choose a basis $(e_1, e_2,\dots, e_7)$ in which the quadratic form takes the form: $g=-2(e^1\cdot e^5+e^2\cdot e^6+e^3\cdot e^7)-(e^4)^2$. Then the group $G_2^*$ is \cite{Fino-Kath} the stabilizer of the next 3-form on $V$:
$$
\Omega_0=\sqrt{2}(e^{123}-e^{567}) +e^4\wedge(e^{15}+e^{26}+e^{37}),
$$
where $e^{ijk}=e^{i}\wedge e^{j}\wedge e^{k}$.
The general element $A$ of the Lie algebra $\g_2^*$ of the group $G_2^*$ in the basis $e_i$ has a simple block form:
$$
A= \left[ \begin {array}{ccccccc} -{a_1}-{a_4}&{a_5}&{a_6}&-
\sqrt {2}{a_{12}}&0&-{a_{11}}&{a_{10}}\\
\noalign{\medskip}{a_7}&{a_1}&{a_2}&-\sqrt {2}{a_{13}}&{a_{11}}&0&-{a_9} \\
\noalign{\medskip}{a_8}&{a_3}&{a_4}&-\sqrt {2}{a_{14}}&-{
a_{10}}&{a_9}&0\\
\noalign{\medskip}-\sqrt {2}{a_9}&-\sqrt {2}{a_{10}}&-\sqrt {2}{a_{11}}&0&\sqrt {2}{a_{12}}&\sqrt {2}{a_{13}}&\sqrt {2}{a_{14}}\\
\noalign{\medskip}0&{a_{14}}&-{a_{13}}&\sqrt {2}{a_9}&{a_1}+{a_4}&-{a_7}&-{a_8}\\
\noalign{\medskip}-{a_{14}}&0&{a_{12}}&\sqrt {2}{a_{10}}&-{a_5}&-{a_1}&-{a_3}
\\
\noalign{\medskip}{a_{13}}&-{a_{12}}&0&\sqrt {2}{a_{11}}&-{a_6}&-{a_2}&-{a_4}\end {array} \right]
$$
In \cite{Yokota77} it is shown that the group $G_2^*$ is homeomorphic to the product space $SO(4)\times \R^8$.

\subsection{Vector cross  products} \label{Vector-prod}
Let $V$ be a real $n$-dimensional vector space with nondegenerate bilinear symmetric form $\langle x,y \rangle$.
A multilinear map $P:V^r \to V$, $(1 \leq r \leq n)$ is called the \emph{vector ($r$-fold) cross product} \cite{Gray} on $V$ if it has properties:
$$
\langle P(x_1,\dots, x_r),x_i \rangle =0,\ 1 \leq i \leq r \quad \text{and} \quad ||P(x_1,\dots, x_r)||^2 =\det(\langle x_i,x_j \rangle).
$$

One-fold vector cross products are orthogonal complex structures $J$ on  even-dimensional vector space $V$.
The two-fold vector cross product exists only in dimension 3 and 7.
On a seven-dimensional vector space $V$ they are described as follows \cite{Gray}.
Let $W$ be a composition algebra and $V$ an orthogonal complement to the identity $e$ in the algebra $W$.
Define $P: V\times V \to V$ by the formula $P(x,y) = xy +\langle x,y \rangle$. Then $P$ is a two-fold vector cross product and, conversely, each such vector cross product arises in this way.
If $\dim W = 8$, we obtain a 2-fold vector product on a 7-dimensional space $V$.
The bilinear form associated with such a vector product has the signature (0, 7) or (4, 3). The automorphism group is either $G_2$ or $G_2^*$, respectively.
	
3-fold vector cross works. Let $V$ be a composition algebra and let $\langle x,y \rangle$ be the corresponding bilinear symmetric form.
We define $P_1,P_2: V\times V \to V$ by the formulas \cite{Gray}:
$$
P_1(x,y,z)= \varepsilon(-x(\overline{y}z) +\langle x,y\rangle z +\langle
y,z\rangle x -\langle z,x\rangle y),
$$
$$
P_2(x,y,z)= \varepsilon(-(x\overline{y})z +\langle x,y\rangle z +\langle
y,z\rangle x -\langle z,x\rangle y),
$$
where $\varepsilon=\pm 1$.
Then $P_1$ and $P_2$ are 3-fold vector cross products on $V$ with bilinear form $\varepsilon\langle x,y\rangle$, and vice versa, each 3-fold vector cross product arises in this way.

The Cayley algebra $\mathbf{Ca'}$ is composite with bilinear symmetric form of the signature (4,4), therefore 3-fold vector cross products $P_1$ and $P_2$ are defined on it.
The conjugation operation defines an anti-isomorphism of these vector products. Therefore, in what follows we will consider only the second vector product $P$:
$$
P(x,y,z)= -(x\overline{y})z +\langle x,y\rangle z +\langle
y,z\rangle x -\langle z,x\rangle y.
$$

\subsection{Pseudo-spheres in the space $\R^{3,4}$} \label{Pseudospheres}
Consider the seven-dimensional space $\R^7$ of purely imaginary octaves $X = x_1e_1 +\dots + x_7e_7$ of the split Cayley algebra $\mathbf{Ca'}$.
It inherits the quadratic form $g$ of the signature (3,4) and is therefore the pseudo-Euclidean space $\R^{3,4}$.
In the space $\R^{3,4}$, there are two types of (pseudo) spheres:
$$
x_1^2+x_2^2+x_3^2-x_4^2-x_5^2-x_6^2-x_7^2=r^2, \quad \text{ and } \quad
x_1^2+x_2^2+x_3^2-x_4^2-x_5^2-x_6^2-x_7^2=-r^2.
$$
The first pseudo sphere $\mathbb{S}^{2,4}(r)$ has a real radius $r>0$, it intersects the axes $Ox_1$, $Ox_2$, $Ox_3$.
The tangent planes are pseudo-Euclidean signatures (2.4).
Scalar squares of the normal vectors to $\mathbb{S}^{2,4}(r)$ are positive, and their squares in the Cayley algebra $\mathbf{Ca'}$ are negative. The group $G_2^*$ acts transitively and isometrically on $\mathbb{S}^{2,4}(r)$, and its isotropy group coincides with $SU(1,2)$. Therefore, the sphere $\mathbb{S}^{2,4}(r)$ is a homogeneous space: $\mathbb{S}^{2,4}=G_2^*/SU(1,2)$. We also note that $\mathbb{S}^{2,4}=SO(3,4)/SO(2,4)$.

The second pseudo sphere $\mathbb{S}^{3,3}(ir)$ has imaginary radius $ir$, it intersects the axes $Ox_4$, $Ox_5$, $Ox_6$ and $Ox_7$.
The tangent planes are pseudo-Euclidean signatures (3.3).
Scalar squares of the normal vectors to $\mathbb{S}^{3,3}(ir)$ are negative, and their squares in the Cayley algebra $\mathbf{Ca'}$ are positive.
The group $G_2^*$ acts transitively and isometrically on $\mathbb{S}^{3,3}(ir)$, and its isotropy group coincides with $SL(3,\R)$. Therefore, the sphere $\mathbb{S}^{3,3}(ir)$ is a homogeneous space: $\mathbb{S}^{3,3}=G_2^*/SL(3,\R)$. .

\subsection{Almost para-complex structures} \label{para-complex}
An \emph{almost para-complex} structure on a $2n$-dimensional manifold $M$ is the field $P$ of endomorphisms of the tangent bundle $TM$ such that $P^2 = Id$ and the ranks of the eigen-distributions $T^\pm := \mathrm{ker}(Id \mp P)$, corresponding to the eigenvalues $\pm 1$, are equal \cite{Aleks-09}.
An almost para-complex structure $P$ is said to be \emph{integrable} if the eigen-distributions $T^\pm$ are involutive.
In this case $P$ is called a \emph{para-complex} structure.
The Nijenhuis tensor $N_P$ of an almost para-complex structure $P$ is defined by formula
$$
N_P(X,Y) = 2([X,Y] +[PX,PY] -P[PX,Y] -P[X,PY]),
$$
for all vector fields $X, Y$ on $M$.
As in the complex case, the almost para-complex structure $P$ is integrable if and only if $N_P = 0$.
In \cite{Aleks-09}, a review of the theory is presented and invariant para-complex and para-K\"{a}hler structures on Lie groups are considered in detail.

\section{The vector cross product in $\R^{3,4} = \Im(\mathbf{Ca'})$} \label{vector-cross-product}
Let $\mathbf{Ca'}$ be the split Cayley algebra.
As usual, numbers from $\mathbf{Ca'}$ of the form $x =x_0\in \R$ will be called real numbers, and the numbers $X = x_1e_1 +x_2e_2 +\dots +x_7e_7$ are purely imaginary.
We will write down the split octonions in the form of a sum $x = x_0 +X$.
The space $\R^{3,4} = \Im(\mathbf{Ca'})$ of imaginary octaves inherits from $\mathbf{Ca'}$ the scalar product $g(X,Y) = \langle X,Y \rangle$ of signature (3,4).
We define the vector cross product of the elements $X,Y\in \Im(\mathbf{Ca'})$ as the imaginary part of their product in the algebra $\mathbf{Ca'}$:
$$
X\times Y = \Im(XY).
$$
It is easy to see that
$$
X\times Y = XY +g(X,Y).
$$
It is also easy to see that the vector cross product $X\times Y$ is bilinear, skew-symmetric, and orthogonal to each of their factors.
The multiplication in the algebra $\mathbf{Ca'}$ is expressed in terms of the vector cross product as follows, for $x = x_0 +X$ and $y =y_0 +Y$, we have:
$$
xy = (x_0 y_0 - \langle X,Y \rangle) +x_0Y +y_0X + X\times Y.
$$
For any vectors $X, Y \in \Im(\mathbf{Ca'})$, the following equality holds:
$$
X\times (X\times Y) = -g(X,X)Y + g(X,Y)X.
$$
In particular, if $\mathbf{n}$ is a vector of unit length, then for any $Y\in \R^{3,4}$,
$$
\mathbf{n}\times (\mathbf{n}\times Y) = -Y + g(\mathbf{n},Y)\mathbf{n}.
$$
The \emph{scalar triple product} is defined by the equality $(XYZ) = g(X,Y\times Z) = g(X\times Y,Z)$ and it is a skew-symmetric 3-form $\Omega$ on $\R^{3,4}$,
$$
\Omega(X,Y,Z) = g(X\times Y,Z).
$$
In the notation $\omega^{pqr} = dx^p\wedge dx^q\wedge dx^r$ the 3-form $\Omega$ has the following expression:
$$
\Omega = \omega^{123} -\omega^{145} +\omega^{167} -\omega^{246} -\omega^{257} -\omega^{347} +\omega^{356}.
$$
Since $\R^{3.4}$ has a pseudo-Riemannian metric and the corresponding volume form $\mu(g)=\omega^{1234567}$, the Hodge $*$-operator is defined, $*:\Lambda^k(\R^{3.4})\to  \Lambda^{7-k}(\R^{3.4})$.
Therefore, on the space $\R^{3.4}= \Im (\mathbf{Ca'})$ there is defined the 4-form $\Psi = *\Omega$,
$$
\Psi = \omega^{4567} -\omega^{2367} +\omega^{2345} -\omega^{1357} -\omega^{1346} -\omega^{1256} +\omega^{1247}.
$$
Then $\Omega = *\Psi$.
It is easy to see that the following expression holds on the vectors $X,Y,Z,W \in \R^{3,4}$:
$$
\Psi(X,Y,Z,W) = g(X,(Y\times Z)\times W) = -g(X,Y\times (Z\times W)).
$$

\begin{lemma}\label{lem-1-associator}
On the space $\R^{3,4} = \Im(\mathbf{Ca'})$, the associator $[X,Y,Z]$ can be expressed by the following formula:
$$
 [X,Y,Z] = 2(X\times Y)\times Z + 2g(Y,Z)X -2g(Z,X)Y.
$$
\end{lemma}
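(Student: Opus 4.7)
The plan is to compute $[X,Y,Z] = (XY)Z - X(YZ)$ directly in $\mathbf{Ca'}$, using the rewriting $XY = X\times Y - g(X,Y)$ for imaginary $X,Y$ (a restatement of the earlier identity $X\times Y = XY + g(X,Y)$).

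First I would expand $(XY)Z = (X\times Y)Z - g(X,Y)Z$; since $X\times Y$ is again imaginary, applying the same rule once more yields $(X\times Y)Z = (X\times Y)\times Z - g(X\times Y, Z)$. Treating $X(YZ)$ analogously gives $X(YZ) = X\times(Y\times Z) - g(X, Y\times Z) - g(Y,Z)X$. Subtracting and invoking $g(X\times Y, Z) = g(X, Y\times Z) = \Omega(X,Y,Z)$ makes the scalar cross-terms cancel, leaving
$$
[X,Y,Z] = (X\times Y)\times Z - X\times(Y\times Z) - g(X,Y)Z + g(Y,Z)X.
$$

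The remaining task is to replace $X\times(Y\times Z)$ by an expression in $(X\times Y)\times Z$. Polarizing the double cross product identity $X\times(X\times Y) = -g(X,X)Y + g(X,Y)X$ via $X \mapsto X+Y$ gives the symmetric relation
$$
X\times(Y\times Z) + Y\times(X\times Z) = -2g(X,Y)Z + g(X,Z)Y + g(Y,Z)X. \qquad(\ast)
$$
Then, invoking the alternative property of $\mathbf{Ca'}$ — which makes the associator totally skew-symmetric — I would apply $[X,Y,Z] = [Y,Z,X]$ to the formula above and simplify, obtaining a second relation $(X\times Y)\times Z - Y\times(X\times Z) = g(X,Y)Z + g(X,Z)Y - 2g(Y,Z)X$. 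Adding this to $(\ast)$ eliminates $Y\times(X\times Z)$ and produces the companion identity
$$
(X\times Y)\times Z + X\times(Y\times Z) = -g(X,Y)Z + 2g(X,Z)Y - g(Y,Z)X,
$$
which I would substitute into the expression for $[X,Y,Z]$ to collapse it to the desired $2(X\times Y)\times Z + 2g(Y,Z)X - 2g(Z,X)Y$.

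The main obstacle is conceptual rather than computational: $(X\times Y)\times Z$ and $X\times(Y\times Z)$ are not merely related by a sign, and their difference is precisely what encodes the non-associativity of $\mathbf{Ca'}$. Extracting this difference requires combining the polarized double cross product identity with the skew-symmetry of the associator inherited from alternativity; once those two ingredients are available, the remaining manipulation is routine bookkeeping.
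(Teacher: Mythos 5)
Your proposal is correct and follows essentially the same route as the paper: both first expand $[X,Y,Z]$ using $XY = X\times Y - g(X,Y)$ to obtain $[X,Y,Z] = (X\times Y)\times Z - X\times(Y\times Z) - g(X,Y)Z + g(Y,Z)X$, and then exploit alternativity (total skew-symmetry of the associator) to trade $X\times(Y\times Z)$ for $(X\times Y)\times Z$ plus scalar terms. The only organizational difference is that you first derive the companion identity $(X\times Y)\times Z + X\times(Y\times Z) = -g(X,Y)Z + 2g(X,Z)Y - g(Y,Z)X$ (which is the paper's Corollary, there deduced \emph{after} the Lemma) by combining the polarized identity $X\times(X\times Y) = -g(X,X)Y + g(X,Y)X$ with the cyclic relation $[X,Y,Z]=[Y,Z,X]$, whereas the paper reaches the same cancellation by summing permuted copies of the expansion directly.
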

\begin{proof}
Using the skew-symmetry of the 3-form $\Omega$ and the associator $[X,Y,Z]$, and also the formula $XY = X\times Y -g(X,Y)$, we obtain:
\begin{multline*}
[X,Y,Z] = (XY)Z - X(YZ) =  \\
=(X\times Y)\times Z - X\times (Y\times Z) -g(X\times Y, Z) +g(X,Y\times Z) -g(X,Y)Z + g(Y,Z)X =\\
=  (X\times Y)\times Z -X\times (Y\times Z) -g(X,Y)Z + g(Y,Z)X.
\end{multline*}
Now we use the resulting expression in the following sum and obtain the necessary formula:
$$
 [X,Y,Z] = [X,Y,Z] + [Z,X,Y] - [X,Z,Y] =
$$
$$
= 2(X\times Y)\times Z + 2g(Y,Z)X -2g(Z,X)Y.
$$
\end{proof}

\begin{corollary}  The following formula holds:
$$
(X\times Y)\times Z -g(X,Z)Y + g(Y,Z)X = -X\times (Y\times Z) + g(X,Z)Y -g(X,Y)Z.
$$
\end{corollary}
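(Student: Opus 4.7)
The plan is to compare two expressions for the associator $[X,Y,Z]$ that both appear in the treatment of Lemma \ref{lem-1-associator}. The proof of that lemma begins by expanding $(XY)Z - X(YZ)$ directly via the identity $UV = U\times V - g(U,V)$ and arrives at the intermediate formula
$$
[X,Y,Z] = (X\times Y)\times Z - X\times(Y\times Z) - g(X,Y)Z + g(Y,Z)X,
$$
before invoking skew-symmetry of the associator (alternativity) to obtain the cleaner statement
$$
[X,Y,Z] = 2(X\times Y)\times Z + 2g(Y,Z)X - 2g(Z,X)Y
$$
displayed in the lemma. The corollary records what it means for these two formulas for the same object to be compatible.

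Concretely, I would first write both expressions side by side and set them equal. Second, I would transpose terms so that $(X\times Y)\times Z + X\times(Y\times Z)$ sits on one side and the scalar-coefficient terms on the other; this is a one-line rearrangement. Using symmetry $g(Z,X) = g(X,Z)$ and collecting gives
$$
(X\times Y)\times Z + X\times(Y\times Z) = 2g(X,Z)Y - g(Y,Z)X - g(X,Y)Z,
$$
which is exactly the claimed identity after moving $X\times(Y\times Z)$ to the right-hand side and the two scalar terms $-g(X,Z)Y$ and $+g(Y,Z)X$ to the left-hand side.

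I do not anticipate any real obstacle, since no new structural input about $\mathbf{Ca'}$ is required beyond what the proof of Lemma \ref{lem-1-associator} already uses. The only point demanding care is the bookkeeping of signs on the four scalar terms and the use of the symmetry of $g$; in particular, one should verify that the coefficient of $Y$ on the right-hand side indeed doubles (to $2g(X,Z)$) while the $X$- and $Z$-coefficients each end up as a single term. This is precisely the content of the corollary: it is the symmetric companion of the lemma, expressing a non-associativity identity for the vector cross product on $\Im(\mathbf{Ca'})$.
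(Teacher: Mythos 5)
Your proof is correct: equating the intermediate expansion $[X,Y,Z] = (X\times Y)\times Z - X\times(Y\times Z) - g(X,Y)Z + g(Y,Z)X$, obtained at the start of the proof of Lemma~\ref{lem-1-associator}, with the lemma's final formula $[X,Y,Z] = 2(X\times Y)\times Z + 2g(Y,Z)X - 2g(Z,X)Y$ and rearranging gives exactly the stated identity, and your sign bookkeeping checks out. The paper's own proof is a slightly different one-liner: it reads the left-hand side of the corollary as $\tfrac12[X,Y,Z]$ and the right-hand side as $-\tfrac12[Z,Y,X]$ via the lemma's closed formula applied to both orderings, and then invokes the skew-symmetry $[X,Y,Z] = -[Z,Y,X]$ of the associator (together with skew-symmetry of the cross product to rewrite $(Z\times Y)\times X = X\times(Y\times Z)$). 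The two routes carry the same mathematical content, since the passage from the intermediate to the final formula in the lemma's proof is precisely where that skew-symmetry of the associator was used; your version trades an explicit appeal to alternativity for a comparison of two displayed formulas. The only stylistic caveat is that the intermediate formula lives inside the proof of Lemma~\ref{lem-1-associator} rather than in its statement, so if one wants the corollary to follow from the lemma as stated, the paper's argument is the cleaner citation; as a self-contained derivation, yours is equally valid.
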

\begin{proof}
It follows from $[X,Y,Z] = -[Z,Y,X]$.
\end{proof}

We also need the following properties of the vector cross product, which immediately follow from the preceding equality:
\begin{itemize}
  \item If $\mathbf{n},Y,Z \in \R^{3,4}$ and if $Y,Z \perp \mathbf{n}$, then $$ (\mathbf{n}\times Y)\times Z = -\mathbf{n}\times (Y\times Z) -g(Y,Z)\mathbf{n}.$$

  \item If $\mathbf{n}$ is a vector of unit length, then for any $Y,Z \in \R^{3,4}$:
$$
\mathbf{n}\times (\mathbf{n}\times Z) = -Z + g(\mathbf{n},Z)\mathbf{n},
$$
$$
g(\mathbf{n}\times Y, \mathbf{n}\times Z) = g(Y,Z) -g(Y,\mathbf{n}) g(Z,\mathbf{n}).
$$
  \item If $g(\mathbf{n},\mathbf{n}) = -1$, then for any $Y,Z \in \R^{3,4}$:
$$
\mathbf{n}\times (\mathbf{n}\times Z) = Z + g(\mathbf{n},Z)\mathbf{n},
$$
$$
g(\mathbf{n}\times Y, \mathbf{n}\times Z) = -g(Y,Z) -g(Y, \mathbf{n}) g(Z, \mathbf{n}).
$$
\end{itemize}

\section{Almost complex structure on $\mathbb{S}^{2,4}$} \label{ACS-on-S24}
Consider the sphere $\mathbb{S}^{2,4} = \mathbb{S}^{2,4}(1)$ of unit radius in the space $\R^{3,4} = \Im(\mathbf{Ca'})$ of imaginary Cayley split-octaves. Let $T_x\mathbb{S}^{2,4}$ be the tangent plane to the sphere at the point $x\in \mathbb{S}^{2,4}$.
It is a pseudo-Euclidean space signature (2.4).
The scalar squares of the normal vectors $\mathbf{n}(x) =x$ to the sphere $\mathbb{S}^{2,4}$ are positive.
Consider the vector cross multiplication of the tangent vectors $Y\in T_x\mathbb{S}^{2,4}$ by the normal vector $\mathbf{n}(x)$.
It is easy to see that this operation takes the tangent space into itself: if $Y\in T_x\mathbb{S}^{2,4}$, that is, $g(Y,\mathbf{n}) = 0$, then
$$
g(\mathbf{n}\times Y,\mathbf{n}) = 0 \Rightarrow \mathbf{n}\times Y \in T_x\mathbb{S}^{2,4}.
$$
It follows from the property of the vector cross product that if $Y\in T_x\mathbb{S}^{2,4}$, then
$$
\mathbf{n}\times (\mathbf{n}\times Y) = -g(\mathbf{n},\mathbf{n})Y +g(\mathbf{n},Y)\mathbf{n} = -Y.
$$
This means that the operator $J_x(Y) = \mathbf{n}\times Y$ of the left multiplication of the tangent vectors at the point $x\in \mathbb{S}^{2,4}$ by the normal vector $\mathbf{n}(x) =x$ to the sphere $\mathbb{S}^{2,4}$ defines a complex structure on $T_x\mathbb{S}^{2,4}$.
Since such an operation is defined at every point $x\in \mathbb{S}^{2,4}$, we obtain that the unit sphere $\mathbb{S}^{2,4}$ has a natural almost complex structure $J$, which we shall call the \emph{Cayley structure}:
$$
J_x: T_x \mathbb{S}^{2,4} \rightarrow T_x \mathbb{S}^{2,4},\quad J_x(Y) = \mathbf{n}(x)\times Y.
$$
From the equality $g(\mathbf{n}\times X, \mathbf{n}\times Y) = g(X,Y) - g(X,\mathbf{n})g(Y,\mathbf{n})$,
it follows immediately that it is orthogonal.
Let $\omega(X,Y) = g(JX,Y)$ be the fundamental 2-form corresponding to $J$. Since $g(X\times Y,Z) = \Omega(X,Y,Z)$, it is easy to see that
$$
\omega(X,Y) = g(\mathbf{n}\times X,Y) = g(\mathbf{n},X\times Y).
$$
\begin{lemma}\label{Lemma 2}
The 3-form $\Omega$ under its restriction to the sphere $\mathbb{S}^{2,4}$ has the property:
$$
\Omega(JX,Y,Z) = \Omega(X,JY,Z) = \Omega(X,Y,JZ).
$$
\end{lemma}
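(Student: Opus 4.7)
The plan is to reduce the three claimed equalities to the identity $(\mathbf{n}\times Y)\times Z = -\mathbf{n}\times(Y\times Z) -g(Y,Z)\mathbf{n}$ (valid for $Y,Z\perp\mathbf{n}$), which is one of the bulleted consequences of the Corollary, together with the total skew-symmetry of $\Omega$. Throughout, $X,Y,Z$ are tangent to $\mathbb{S}^{2,4}$ at $x$, so $g(\mathbf{n},X)=g(\mathbf{n},Y)=g(\mathbf{n},Z)=0$ and $JW=\mathbf{n}\times W$.

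First I would establish $\Omega(JX,Y,Z)=\Omega(X,Y,JZ)$. By definition,
$\Omega(JX,Y,Z)=g((\mathbf{n}\times X)\times Y,Z)$. Applying the bulleted identity with the substitution $Y\mapsto X$, $Z\mapsto Y$ (permissible since $X,Y\perp\mathbf{n}$) gives
$(\mathbf{n}\times X)\times Y=-\mathbf{n}\times(X\times Y)-g(X,Y)\mathbf{n}$. Pairing with $Z$ and using $g(\mathbf{n},Z)=0$ kills the $\mathbf{n}$-term, leaving $-g(\mathbf{n}\times(X\times Y),Z)$. By the total skew-symmetry of $\Omega$ (equivalently, the already noted skew-adjointness of left multiplication by $\mathbf{n}$), this equals $g(X\times Y,\mathbf{n}\times Z)=\Omega(X,Y,JZ)$.

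Next I would show $\Omega(X,JY,Z)=\Omega(X,Y,JZ)$ by the same trick. Write $\Omega(X,JY,Z)=g(X\times(\mathbf{n}\times Y),Z)$ and use skew-symmetry of the cross product to turn this into $-g((\mathbf{n}\times Y)\times X,Z)$. Now apply the same bulleted identity (with $Y\mapsto Y$, $Z\mapsto X$, both $\perp\mathbf{n}$): $(\mathbf{n}\times Y)\times X=-\mathbf{n}\times(Y\times X)-g(Y,X)\mathbf{n}$. Pairing with $Z$, the $\mathbf{n}$-term again vanishes, and we get
$g(\mathbf{n}\times(Y\times X),Z)=-g(\mathbf{n}\times(X\times Y),Z)=g(X\times Y,\mathbf{n}\times Z)=\Omega(X,Y,JZ)$. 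Combining the two displayed equalities yields the triple identity.

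The only obstacle is a bookkeeping one: one must carefully verify that every occurrence of a normal-component term $g(\cdot,\cdot)\mathbf{n}$ produced by the auxiliary identity is annihilated upon pairing with the remaining tangent vector, and that signs from skew-symmetry of $\times$ and of $\Omega$ cancel correctly. There is no genuine algebraic difficulty beyond this; the statement is essentially the assertion that $J$ acts as a derivation on the restriction of $\Omega$, which follows once left multiplication by $\mathbf{n}$ is seen to anti-commute with the cross product of tangent vectors modulo the normal direction.
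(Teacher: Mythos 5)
Your proof is correct and follows essentially the same route as the paper: both rest on the identity $(\mathbf{n}\times Y)\times Z = -\mathbf{n}\times(Y\times Z)-g(Y,Z)\mathbf{n}$ for tangent vectors, the adjoint property $g(X\times Y,Z)=g(X,Y\times Z)$, and the vanishing of the normal term against a tangent vector. The only cosmetic difference is that you carry out two slot-by-slot computations, whereas the paper does one ($\Omega(Z,JX,Y)=\Omega(JZ,X,Y)$) and lets the skew-symmetry of $\Omega$ supply the rest.
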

\begin{proof}
We use the formula $(\mathbf{n}\times Y)\times Z = -\mathbf{n}\times (Y\times Z) -g(Y,Z)\mathbf{n}$ and equality $g(X,Y\times Z) = g(X\times Y,Z)$. $\Omega(Z, JX,Y) = g(Z, JX\times Y) = g(Z, (\mathbf{n}\times X)\times Y) = g(Z, -\mathbf{n}\times (X\times Y) -g(X,Y)\mathbf{n}) = g(Z, -\mathbf{n}\times (X\times Y)) =  -g(Z\times \mathbf{n}, X\times Y) = g(\mathbf{n}\times Z, X\times Y) = g(JZ, X\times Y) = \Omega(JZ,X,Y)$.
\end{proof}

\begin{lemma}\label{Lemma 3}
For any $X,Y,Z \in T_x \mathbb{S}^{2,4}$, we have the equality
$$
\iota_\mathbf{n}\Psi(X,Y,Z) = -\Omega(JX,Y,Z),
$$
where $\iota_\mathbf{n}$ is the inner product with the normal vector $\mathbf{n}(x)$.
\end{lemma}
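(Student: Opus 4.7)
The plan is to unwind both sides into expressions involving only the triple cross product structure and then match them using the associator-type identity already established.

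First I would expand the left-hand side. By definition $\iota_\mathbf{n}\Psi(X,Y,Z) = \Psi(\mathbf{n},X,Y,Z)$, and the identity stated just before the lemma gives $\Psi(\mathbf{n},X,Y,Z) = g(\mathbf{n},(X\times Y)\times Z)$. Next I would expand the right-hand side: since $JX = \mathbf{n}\times X$ and $\Omega(A,B,C)=g(A\times B,C)$, we get
$$
-\Omega(JX,Y,Z) = -g\bigl((\mathbf{n}\times X)\times Y,\, Z\bigr).
$$

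The key step is the identity from the bullet list after the corollary: for $Y,Z\perp\mathbf{n}$ (which holds because $X,Y,Z\in T_x\mathbb{S}^{2,4}$), one has $(\mathbf{n}\times X)\times Y = -\mathbf{n}\times(X\times Y) - g(X,Y)\mathbf{n}$. Substituting this in,
$$
-\Omega(JX,Y,Z) = g\bigl(\mathbf{n}\times(X\times Y),Z\bigr) + g(X,Y)\,g(\mathbf{n},Z).
$$
The second term vanishes since $Z\in T_x\mathbb{S}^{2,4}$ means $g(\mathbf{n},Z)=0$. For the first term, the cyclic symmetry $g(A\times B,C)=g(A,B\times C)$ of the scalar triple product gives $g(\mathbf{n}\times(X\times Y),Z) = g(\mathbf{n},(X\times Y)\times Z)$, which matches the expanded left-hand side exactly.

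I do not expect a serious obstacle: the main content of the lemma is a direct application of the bullet-list identity together with the definition of $\Psi$ in terms of iterated cross products. The only subtlety worth stating clearly is the hypothesis $Y,Z\perp \mathbf{n}$ required for that identity, which is precisely what it means for $Y,Z$ to be tangent to $\mathbb{S}^{2,4}$; noting this justifies dropping the $g(X,Y)g(\mathbf{n},Z)$ correction term. Everything else is bookkeeping.
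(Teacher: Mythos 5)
Your proof is correct, but it takes a slightly different route than the paper. The paper uses the \emph{other} expansion of $\Psi$ given just before Lemma \ref{lem-1-associator}'s corollaries, namely $\iota_\mathbf{n}\Psi(X,Y,Z)=\Psi(\mathbf{n},X,Y,Z)=-g(\mathbf{n},X\times(Y\times Z))$, and then finishes in one step with the cyclic symmetry of the scalar triple product: $-g(\mathbf{n},X\times(Y\times Z))=-g(\mathbf{n}\times X,Y\times Z)=-\Omega(JX,Y,Z)$. That chain needs no correction term, no tangency hypotheses, and does not invoke the bullet-list identity at all. You instead use the expansion $\Psi(\mathbf{n},X,Y,Z)=g(\mathbf{n},(X\times Y)\times Z)$ and rewrite $-\Omega(JX,Y,Z)=-g((\mathbf{n}\times X)\times Y,Z)$ via the identity $(\mathbf{n}\times X)\times Y=-\mathbf{n}\times(X\times Y)-g(X,Y)\mathbf{n}$, then discard the $g(X,Y)g(\mathbf{n},Z)$ term using $Z\perp\mathbf{n}$; this is valid, and the hypothesis you need for that identity (applied with arguments $X,Y$ in place of the bullet's $Y,Z$) is $X,Y\perp\mathbf{n}$ rather than $Y,Z\perp\mathbf{n}$ as you wrote --- harmless here since all three vectors are tangent, but worth stating precisely. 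The trade-off: your argument leans on the bullet-list consequence of the associator lemma and on tangency, while the paper's two-line computation is more economical and in fact proves the identity $\Psi(\mathbf{n},X,Y,Z)=-g(\mathbf{n}\times X,Y\times Z)$ for arbitrary $X,Y,Z\in\R^{3,4}$.
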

\begin{proof}
$\iota_\mathbf{n}\Psi(X,Y,Z) = -g(\mathbf{n},X\times (Y\times Z)) = -g(\mathbf{n}\times X,Y\times Z) =-\Omega(JX,Y,Z)$.
\end{proof}

Let $e_1,\dots, e_6$ be an orthonormal basis of the space $T_x \mathbb{S}^{2,4}$, in which $e_1$ and $e_2$ are space-like and the others are time-like. Then $\mathbf{n}(x), e_1,\dots, e_6$ is the orthonormal basis of the space $\R^{3,4}= \R\{\mathbf{n}(x)\} \oplus T_x\mathbb{S}^{2,4}$.
Let $\mu =\mathbf{n}^*\wedge e^1 \wedge \dots \wedge e^6$ be the volume element of $\R^{3,4}$ and $\mu_S = e^1 \wedge \dots \wedge e^6$ be the volume element of the sphere $\mathbb{S}^{2,4}$ (at the point $x$).
Obviously, $\mu_S =\iota_\mathbf{n}\mu$, where $\iota_\mathbf{n}$ is the inner product with the normal vector $\mathbf{n}(x)$.
Let $\ast_S$ and $\ast_R$ be Hodge operators on $\mathbb{S}^{2,4}$ and  $\R^{3,4}$, respectively.
The symbol $\theta|_S$ denotes the restriction of the differential form $\theta$ in $\R^{3,4}$ to the submanifold $\mathbb{S}^{2,4}$ and the symbol $\theta_\mathbf{n}$ is the normal component of the form $\theta$, that is, $\theta_\mathbf{n}=\mathbf{n}^*\wedge \widetilde{\theta}$, where $\widetilde{\theta}$ is a $(k-1)$-form, which is expressed in terms of $e^1,\dots, e^6\in T_x^*\mathbb{S}^{2,4}$. Then $\theta=\theta|_S + \mathbf{n}^*\wedge \widetilde{\theta}$.

%\begin{lemma}\label{Lemma 4}
%Let $\theta$ be a $k$-form on $\mathbb{S}^{2,4}$.
%Then
%$$
%\ast_R\theta = (-1)^k\mathbf{n}\wedge \ast_S\theta, \quad  \ast_S\theta = (-1)^k\iota_\mathbf{n}(\ast_R\theta).
%$$
%\end{lemma}
%\begin{proof}
%For any $k$-form $\varphi$ on $\mathbb{S}^{2,4}$ we have $\varphi\wedge \ast_S\theta  = g(\varphi, \theta)\mu_S$.
%Since $\mu= \mathbf{n}\wedge \mu_S$, we obtain
%$\mathbf{n}\wedge \varphi\wedge \ast_S\theta = g(\varphi,\theta)\mathbf{n}\wedge \mu_S = g(\varphi,\theta)\mu$, or, which is the same thing,
%$(-1)^k\varphi\wedge \mathbf{n}\wedge \ast_S\theta = g(\varphi,\theta)\mu = \varphi\wedge \ast_R\theta$.
%The last equality is true not only for the forms $\varphi$ on $T_x\mathbb{S}^{2,4}$, but also for any $k$-forms $\varphi=\varphi_S +\mathbf{n}\wedge \widetilde{\varphi}$  on $\R^7$.
%On the second component $\mathbf{n}\wedge \widetilde{\varphi}$, both sides of the equality vanish.
%Therefore, we obtain the equality $(-1)^k\mathbf{n}\wedge \ast_S\theta =\ast_R\theta$.
%Applying the inner product $\iota_\mathbf{n}$ to both sides of the last equality, we obtain the assertion of the lemma.
%\end{proof}

%%%%%%%%%%%%%%%%%%%%%%%%%%%%%%%%%%%%%%%%%%%%%%%%%%%
\begin{lemma}\label{Lemma 4}
Let $\theta$ be a $k$-form on $\R^{3,4}$ and let $\theta|_S$ be its restriction to the tangent space $T_x\mathbb{S}^{2,4}\subset \R^{3,4}$.
Then
$$
\ast_R(\theta|_S) = (-1)^k\mathbf{n}^*\wedge \ast_S(\theta|_S), \quad      \ast_S(\theta|_S) = (-1)^k\iota_\mathbf{n}(\ast_R\theta).
$$
%$$
%\ast_R\theta = (-1)^k\mathbf{n}\wedge \ast_S\theta, \quad      \ast_S(\theta|_S) = (-1)^k\iota_\mathbf{n}(\ast_R\theta).
%$$
\end{lemma}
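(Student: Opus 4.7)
The plan is to establish both formulas pointwise by testing them against the adapted orthonormal coframe $\mathbf{n}^*, e^1,\dots, e^6$ of $\R^{3,4}$ and invoking linearity. Since $\theta|_S$ is, by definition, a $k$-form in the purely tangential coframe, it suffices to verify the first identity on a single monomial $e^I := e^{i_1}\wedge\cdots\wedge e^{i_k}$ with $I \subset \{1,\dots,6\}$, and then promote it by linearity.

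Let $J$ denote the complement of $I$ in $\{1,\dots,6\}$, let $\sigma_I = \pm 1$ be the permutation sign defined by $e^I \wedge e^J = \sigma_I \mu_S$, and set $\eta_I = \prod_{i\in I} g(e_i,e_i)$. Applying the defining identity $\alpha \wedge \ast\beta = g(\alpha,\beta)\,\mu$ on the sphere yields $\ast_S e^I = \eta_I \sigma_I\, e^J$. On the ambient side, sliding $\mathbf{n}^*$ past the $k$-form $e^I$ gives $e^I \wedge (\mathbf{n}^* \wedge e^J) = (-1)^k \sigma_I \,\mu$, and since $g(\mathbf{n},\mathbf{n}) = 1$ this forces $\ast_R e^I = (-1)^k \eta_I \sigma_I\,\mathbf{n}^* \wedge e^J = (-1)^k \mathbf{n}^* \wedge \ast_S e^I$. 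This is the first identity.

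For the second identity I would use the orthogonal decomposition $\theta = \theta|_S + \mathbf{n}^* \wedge \widetilde{\theta}$ introduced just before the lemma. The first summand contributes $(-1)^k \mathbf{n}^* \wedge \ast_S(\theta|_S)$ by the formula just proved. A parallel monomial computation, again using $g(\mathbf{n},\mathbf{n})=1$, shows that $\ast_R(\mathbf{n}^* \wedge \widetilde{\theta}) = \ast_S\widetilde{\theta}$, which is expressed purely in $e^1,\dots,e^6$ and therefore has vanishing contraction with $\mathbf{n}$. Now apply $\iota_{\mathbf{n}}$: since $\iota_{\mathbf{n}} \mathbf{n}^* = g(\mathbf{n},\mathbf{n}) = 1$ and $\iota_{\mathbf{n}}$ annihilates any form in the tangential coframe, only the first piece survives, yielding $\iota_{\mathbf{n}}(\ast_R\theta) = (-1)^k \ast_S(\theta|_S)$, which rearranges into the desired equality. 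Observe that the lemma is stated with $\theta$ rather than $\theta|_S$ on the right, and the above computation explains exactly why this is consistent: the normal part of $\theta$ is mapped by $\ast_R$ into a tangential form and so is killed by $\iota_{\mathbf{n}}$.

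The only real obstacle is sign bookkeeping: the factor $(-1)^k$ arising from commuting $\mathbf{n}^*$ past a $k$-form, the signature factors $\eta_I$, and the permutation signs $\sigma_I$ must all be tracked carefully, but no conceptual difficulty arises once the adapted basis is in place.
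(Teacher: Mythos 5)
Your argument is correct, and it proves both formulas. The mechanism is the same as in the paper --- everything rests on the defining relation $\varphi\wedge\ast\beta=g(\varphi,\beta)\mu$ together with the factorization $\mu=\mathbf{n}^*\wedge\mu_S$ --- but the execution differs: the paper never chooses a basis, instead wedging the sphere-level identity $\varphi\wedge\ast_S(\theta|_S)=g(\varphi,\theta|_S)\mu_S$ with $\mathbf{n}^*$, observing that the resulting equality $(-1)^k\varphi\wedge\mathbf{n}^*\wedge\ast_S(\theta|_S)=\varphi\wedge\ast_R(\theta|_S)$ holds for all ambient test forms $\varphi$ (both sides vanish on the normal component of $\varphi$), and concluding the first formula by nondegeneracy of the pairing; you instead verify the first formula monomial by monomial in the adapted coframe, tracking the signature factors $\eta_I$ and permutation signs $\sigma_I$ explicitly. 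Both are sound; the paper's test-form argument is shorter and sign-free, while your computation is more concrete. One genuine merit of your write-up: for the second formula the paper simply applies $\iota_\mathbf{n}$ to the first identity, which literally yields $\ast_S(\theta|_S)=(-1)^k\iota_\mathbf{n}\bigl(\ast_R(\theta|_S)\bigr)$, and leaves implicit why $\theta|_S$ may be replaced by $\theta$ as stated; you supply exactly the missing observation, namely that $\ast_R$ sends the normal part $\mathbf{n}^*\wedge\widetilde{\theta}$ to a purely tangential form, which is then annihilated by $\iota_\mathbf{n}$ (using $\iota_\mathbf{n}\mathbf{n}^*=g(\mathbf{n},\mathbf{n})=1$ on $\mathbb{S}^{2,4}$). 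Note also that your appeal to $g(\mathbf{n},\mathbf{n})=1$ is only needed in that second step; the first identity holds irrespective of the sign of $g(\mathbf{n},\mathbf{n})$, which is why the same statement transfers verbatim to $\mathbb{S}^{3,3}$ later in the paper.
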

\begin{proof}
For any $k$-form $\varphi$ on $\mathbb{S}^{2,4}$ we have $\varphi\wedge \ast_S(\theta|_S) = g(\varphi, \theta|_S)\mu_S$.
Since $\mu= \mathbf{n}^*\wedge \mu_S$, we obtain
$\mathbf{n}^*\wedge \varphi\wedge \ast_S(\theta|_S) = g(\varphi,\theta|_S)\mathbf{n}^*\wedge \mu_S = g(\varphi,\theta|_S)\mu$, or, which is the same thing,
$(-1)^k\varphi\wedge \mathbf{n}^*\wedge \ast_S(\theta|_S) = g(\varphi,\theta|_S)\mu = \varphi\wedge \ast_R(\theta|_S)$.
The last equality is true not only for the forms $\varphi$ on $T_x\mathbb{S}^{2,4}$, but also for any $k$-forms $\varphi=\varphi_S +\mathbf{n}^*\wedge \widetilde{\varphi}$  on $\R^7$.
On the second component $\mathbf{n}^*\wedge \widetilde{\varphi}$, both sides of the equality vanish.
Therefore, we obtain the equality $(-1)^k\mathbf{n}^*\wedge \ast_S(\theta|_S) =\ast_R(\theta|_S)$.
Applying the inner product $\iota_\mathbf{n}$ to both sides of the last equality, we obtain the assertion of the lemma.
\end{proof}
%%%%%%%%%%%%%%%%%%%%%%%%%%%%%%%%%%%%%%%%%%%%%%%%%%%

\begin{theorem} \label{Th-1}
The fundamental form $\omega$ of an almost complex Cayley structure $J$ on $\mathbb{S}^{2,4}$ have the following properties:
\begin{equation}
		\omega = \iota_\mathbf{n}\Omega, \qquad  d\omega=3\Omega|_S,
\end{equation}
\begin{equation}
 		\ast_S\omega = \Psi|_S, \qquad \ast_Sd\omega = -3\iota_\mathbf{n}\Psi,
\end{equation}
\begin{equation}
 		d\omega(X,Y,Z) = 3\iota_\mathbf{n}\Psi(JX,Y,Z),
\end{equation}
\begin{equation}
 		d\omega(JX,Y,Z) = d\omega(X,JY,Z) = d\omega(X,Y,JZ),
\end{equation}
\begin{equation}
 		d\omega(X,JY,JZ) = –d\omega(X,Y,Z).
\end{equation}
\begin{equation}
 		\omega\wedge d\omega = 0.
\end{equation}
\end{theorem}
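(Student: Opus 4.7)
My plan is to reduce all six identities to the single computation $d\omega = 3\,\Omega|_S$ and then invoke Lemmas 2, 3, and 4. For the first formula in (1), unfolding the definitions gives $\omega(X,Y) = g(\mathbf{n}\times X,Y) = g(\mathbf{n}, X\times Y) = \Omega(\mathbf{n},X,Y) = (\iota_\mathbf{n}\Omega)(X,Y)$. To obtain $d\omega = 3\,\Omega|_S$, I would extend $\omega$ to the ambient 2-form $\tilde\omega = \iota_\mathbf{n}\Omega$ on $\R^{3,4}$, now reading $\mathbf{n}$ as the radial vector field $\mathbf{n}(x)=x$. Because $\Omega$ has constant coefficients, $d\Omega=0$ and $\mathcal{L}_\mathbf{n}dx^i = dx^i$, so Cartan's magic formula collapses to $d\tilde\omega = \mathcal{L}_\mathbf{n}\Omega = 3\Omega$; restricting to $\mathbb{S}^{2,4}$ finishes (1).

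For (2), apply Lemma 4. Taking $\theta = \Psi$ (degree $k=4$) and using the already-noted identity $\ast_R\Psi = \Omega$ yields $\ast_S(\Psi|_S) = \iota_\mathbf{n}\Omega|_S = \omega$; since $\ast_S^2 = +1$ on 4-forms in signature $(2,4)$, this is equivalent to $\ast_S\omega = \Psi|_S$. Taking $\theta = \Omega$ ($k=3$) instead gives $\ast_S(\Omega|_S) = -\iota_\mathbf{n}\Psi$, so multiplying by $3$ yields $\ast_S d\omega = -3\iota_\mathbf{n}\Psi$. For (3), substitute $JX$ for $X$ in Lemma 3 and use $J^2=-I$: $\iota_\mathbf{n}\Psi(JX,Y,Z) = -\Omega(J^2X,Y,Z) = \Omega(X,Y,Z) = \tfrac13 d\omega(X,Y,Z)$. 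Identity (4) is then immediate from the symmetry in Lemma 2, and (5) follows by applying (4) twice: $d\omega(X,JY,JZ) = d\omega(X,J^2Y,Z) = -d\omega(X,Y,Z)$.

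The most interesting statement is (6). My preferred argument uses the bi-degree decomposition induced by $J$. The Hermitian relation $\omega(JX,JY)=\omega(X,Y)$ places $\omega$ in $\Lambda^{1,1}$. Extending $\Omega|_S$ by complex linearity and evaluating on type-$(1,0)$ and type-$(0,1)$ vectors (on which $J$ acts as $\pm i$) turns Lemma 2's symmetry $\Omega(JX,Y,Z)=\Omega(X,JY,Z)$ into $i\Omega = -i\Omega$ on any mixed-type triple, forcing $\Omega|_S\in\Lambda^{3,0}\oplus\Lambda^{0,3}$. Since $\dim_\mathbb{C} T_x\mathbb{S}^{2,4}=3$, no forms of type $(4,1)$ or $(1,4)$ exist, hence $\omega\wedge\Omega|_S=0$ and $\omega\wedge d\omega = 3\,\omega\wedge\Omega|_S = 0$. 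A more pedestrian fallback: by $G_2^*$-transitivity on $\mathbb{S}^{2,4}$ it suffices to verify (6) at one point, say $x=e_1$; expanding $\iota_{e_1}\Omega\wedge\Omega$ in the standard basis produces a 5-form every surviving monomial of which contains $dx^1$ as a factor, so its restriction to $T_{e_1}\mathbb{S}^{2,4}=\mathrm{span}(e_2,\ldots,e_7)$ vanishes identically.

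The principal technical nuisance throughout is pseudo-Riemannian sign bookkeeping, specifically verifying the Hodge-star relations $\ast_R\Psi=\Omega$ and the values of $\ast_S^2$ on forms of degree $2$, $3$, $4$ in signature $(2,4)$. Once those signs are pinned down, everything else is assembly from Lemmas 2--4 and the one identity $d\omega = 3\,\Omega|_S$.
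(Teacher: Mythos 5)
Your proposal is correct, and for (1)--(3) it is essentially the paper's own argument: $\omega=\iota_\mathbf{n}\Omega$ straight from the definitions, $d\omega=3\,\Omega|_S$ via Cartan's formula with the Euler field $\mathbf{n}(x)=x$ and closedness of the constant-coefficient $\Omega$, and the Hodge identities from Lemma \ref{Lemma 4} combined with $\ast_R\Omega=\Psi$, $\ast_R\Psi=\Omega$. (The paper reaches $\ast_S\omega=\Psi|_S$ through the operator identity $\iota_\mathbf{n}\ast_R=(-1)^k\ast_R\,\mathbf{e}_\mathbf{n}$ applied to $\iota_\mathbf{n}\Omega$, while you apply Lemma \ref{Lemma 4} to $\theta=\Psi$ and invert using $\ast_S^2=+1$; both are valid, and your sign bookkeeping in signature $(2,4)$ checks out.) For (3) you substitute $JX$ into Lemma \ref{Lemma 3}, which is exactly what the paper's terse citation amounts to; for (4)--(5) you use Lemma \ref{Lemma 2} and iterate it, whereas the paper cites the associator formula of Lemma \ref{lem-1-associator} --- equivalent content, since Lemma \ref{Lemma 2} rests on the same product identities --- and your derivation pins the sign in (5) as a minus, which is the correct reading of the garbled character in the statement (the para-complex analogue in Section 5 confirms it). The one place you genuinely depart from the paper is (6): the paper dismisses it with ``follows from $\iota_\mathbf{n}(\Omega\wedge\Omega)=0$'', which as written is vacuous (an odd-degree form wedged with itself vanishes identically, so the expansion gives $0=0$ and not the claim), whereas your argument --- $\omega$ of type $(1,1)$ by $J$-invariance, Lemma \ref{Lemma 2} forcing $\Omega|_S\in\Lambda^{3,0}\oplus\Lambda^{0,3}$, and the nonexistence of $(4,1)$ and $(1,4)$ forms in complex dimension $3$ --- is a complete proof, valid in the pseudo-Hermitian setting since the type decomposition uses only $J$ and the $J$-invariance of $g$. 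Your fallback also works: by $G_2^*$-transitivity it suffices to check at $x=e_1$, where every monomial of $\iota_{e_1}\Omega\wedge\Omega$ contains $e^1$ and hence dies under restriction to $T_{e_1}\mathbb{S}^{2,4}$, using that restriction commutes with wedge. So your route buys a rigorous treatment of (6) where the paper is merely telegraphic, at the cost of a slightly longer detour through the bidegree decomposition.
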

\begin{proof}
The first equality follows from the definition of $\omega$ and $\Omega$: $\iota_\mathbf{n}\Omega(X,Y) = g(\mathbf{n},X\times Y) = \omega(X,Y)$, for any $X,Y$ tangent to the sphere. In the second equality, we assume that the vector field $\mathbf{n}(x) = x$ is defined on the whole of $\R^7$.
Then, since the 3-form $\Omega$ with constant coefficients in $\R^7$, it is closed, therefore $d\iota_\mathbf{n}\Omega = L_\mathbf{n}\Omega = 3\Omega$, where $L_\mathbf{n} = d\,\iota_\mathbf{n} + \iota_\mathbf{n}\,d$ is the Lie derivative along the vector field $\mathbf{n}(x) = x$ on $\R^7$.
Therefore, $d\omega = d\iota_\mathbf{n}\Omega = L_\mathbf{n}\Omega = 3\Omega$. The second pair of equalities is obtained from Lemmas \ref{Lemma 3} and \ref{Lemma 4} and from the first properties, $\ast_S\omega  = \ast_S(\iota_\mathbf{n}\Omega)|_S = \iota_\mathbf{n}\ast_R(\iota_\mathbf{n}\Omega)$.
In more detail, let $\mathbf{e}_\mathbf{n}$ be the operator of exterior multiplication by a 1-form dual to the vector field $\mathbf{n}(x) = x$.
As is known, on the $k$-forms the operators $\iota_\mathbf{n}$ and $\mathbf{e}_\mathbf{n}$ are related by the relation $\iota_\mathbf{n}\cdot\ast_R = (-1)^k\ast_R\cdot\, \mathbf{e}_\mathbf{n}$. therefore
$$
\ast_S\omega = \iota_\mathbf{n}\ast_R(\iota_\mathbf{n}\Omega) = \ast_R\mathbf{e}_\mathbf{n}(\iota_\mathbf{n}\Omega) = \ast_R\mathbf{e}_\mathbf{n}\left(\iota_\mathbf{n}(\Omega|_S+\Omega_\mathbf{n})\right) = \ast_R(\Omega_\mathbf{n}) = \Psi|_S.
$$
Further, $\ast_Sd\omega = 3\ast_S\Omega|_S = -3\iota_\mathbf{n}\ast_R\Omega = -3\iota_\mathbf{n}\Psi$.
Equality (3) follows from (1) and from Lemma 2. Properties (4-5) follow from (1) and Lemma \ref{lem-1-associator}.
The last equality follows from (1) and from $\iota_\mathbf{n}(\Omega\wedge\Omega)=0$.
\end{proof}

\begin{remark}
The fundamental 2-form $\omega$ on $\mathbb{S}^{2,4}$ is co-closed, $\delta\omega =0$.
Indeed, the 4-form $\Psi$ on $\R^7$ has constant coefficients, therefore $d\Psi = 0$. It is well known that on the $k$-forms the codifferential $\delta$ has the expression: $\delta = (-1)^k \ast^{-1}d\ast = (-1)^{kn+n+1+\eta} \ast d\ast$, where $\eta$ is the number of "minuses" of the metric tensor $g$. In our case, $\eta = 4$ and $n =6$.
Hence $\delta = -\ast d\ast$. Therefore:
$$
\delta\omega =-\ast d\ast\omega = -\ast d(\Psi|_S) = -\ast(d \Psi)|_S = 0.
$$
\end{remark}

In \cite{Hitchin}, Hitchin defined the notion of nondegeneracy (stability) for 3-forms $\Omega$ on a six-dimensional real vector space $V$ and constructed a linear operator $K_\Omega$ for the 3-form $\Omega$, whose square is proportional to the identity operator, $K_\Omega^2 = \lambda(\Omega)Id$.
If the $\lambda(\Omega) <0$, then $K_\Omega$ defines a complex structure $I_\Omega$ on $V$, and if $\lambda(\Omega) >0$, then the para-complex structure $P_\Omega$.
Let $\mu$ be a volume form on $V$, then $K_\Omega$ is defined by following formula: $\iota_{K_\Omega(X)}\mu= \iota_X\Omega\wedge\Omega$.

For the fundamental form $\omega$ of an almost complex Cayley structure on $\mathbb{S}^{2,4}$, the 3-form $d\omega$ has the form $d\omega=3\Omega|_S$, where $\Omega = \omega^{123} -\omega^{145} +\omega^{167} -\omega^{246} -\omega^{257} -\omega^{347} +\omega^{356}$.
We find the Hitchin operator $K_{d\omega}$ for the 3-form $d\omega$ on the sphere $\mathbb{S}^{2,4}$.
First consider the point $x = e_1 \in \mathbb{S}^{2,4}$.
The tangent space $T_x\mathbb{S}^{2,4}$ has an orthonormal basis of vectors $e_2,\dots, e_7$ and the volume form $\mu = \omega^{234567}$.
The 2-form $d\omega$ on $T_x\mathbb{S}^{2,4}$ has the form
$$
d\omega_x=3(-\omega^{246} -\omega^{257} -\omega^{347} +\omega^{356}).
$$
The 2-form $d\omega_x$ is invariant at the action on $T_x\mathbb{S}^{2,4}$ of the isotropy subgroup $SU(1,2)\subset G_2^*$. Therefore, it is sufficient to calculate $K_{d\omega}$ on one vector, for example, $K_{d\omega}(e_2)$.
$$
\iota_{e_2}d\omega_x\wedge d\omega_x =9(-\omega^{46} -\omega^{57})\wedge (-\omega^{246} -\omega^{257} -\omega^{347} +\omega^{356})=
$$
$$
=9(\omega^{46}\wedge\omega^{257} +\omega^{57}\wedge\omega^{246})= -18\omega^{24567} = 18\iota_{e_3}\omega^{234567}=18\iota_{e_3}\mu.
$$
Therefore, $K_{d\omega}(e_2) = 18e_3 = 18e_1\times e_2 = 18J(e_2)$.
It follows that $I_{d\omega} = J_x$.
From the $G_2^*$-invariance of the almost complex Cayley structure $J$ and the form $\Omega$ we obtain that the equality $I_{d\omega} = J$ holds at all points of the pseudo-sphere $\mathbb{S}^{2,4}$.

\begin{conclusion}
The 3-form $d\omega$ on $\mathbb{S}^{2,4}$ is everywhere nondegenerate and defines an almost complex structure on $\mathbb{S}^{2,4}$, which coincides with the almost complex structure of Cayley $J$.
\end{conclusion}

%%%%%%%%%%%%%%%%%%%%%%%%%%%%%%%%%%%%%%%%%%%%%%%%%%%%%%%

Let's calculate the Nijenhuis tensor
$$
N(X,Y) = 2([JX,JY] -[X,Y] -J[JX,Y] -J[X,JY])
$$
of the almost complex structure $J$.
To do this, we find the covariant derivative of the tensor $J$. Since $\mathbf{n}(x) = x$, then for any tangent vector $X\in T_x\mathbb{S}^{2,4}$ we have $D\mathbf{n}_x(X) = X$.
We assume that the tangent vectors $X,Y\in T_x\mathbb{S}^{2,4}$ are continued to the space $\R^7$ as constant vector fields and $\mathbf{n}(x) =x$ is also defined on $\R^7\setminus\{0\}$.
Then we have the equalities $(\nabla_XJ)Y  =\mathrm{pr}_x(D_X(JY)) = \mathrm{pr}_x(D_X(\mathbf{n}\times Y)) = \mathrm{pr}_x(X\times Y)$, where $\mathrm{pr}$ is the projection orthogonal of $\R^{3,4}$ onto the tangent space $T_x\mathbb{S}^{2,4}$: $\mathrm{pr}_x(Z) = Z -g(Z,\mathbf{n})\mathbf{n}$. We get,
\begin{equation} \label{nabla-J}
 		(\nabla_XJ)Y = X\times Y -g(\mathbf{n}, X\times Y)\mathbf{n} = X\times Y -\omega(X,Y)\mathbf{n}.
\end{equation}

\begin{remark}
From the resulting expression (\ref{nabla-J}) of $\nabla_XJ$, the skew-symmetry of $\nabla_XJ$ follows immediately and, in particular, the property $\nabla_X(J)X =0$ of the nearly pseudo-K\"{a}hler property of the variety $\mathbb{S}^{2,4}$ and the equality $\nabla_{JX}J =-J\nabla_XJ$.
\end{remark}

\begin{theorem}\label{Th-2}
The Nijenhuis tensor $N(X,Y)$ of the almost complex Cayley structure $J$ on $\mathbb{S}^{2,4}$ has the form
$$
N(X,Y) = -8\,\mathbf{n}\times (X\times Y)= -8\,J(X\times Y).
$$
\end{theorem}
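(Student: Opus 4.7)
The plan is to reduce the Nijenhuis tensor to the covariant derivative $\nabla J$ via the Levi--Civita connection, then substitute the explicit formula (\ref{nabla-J}) and use the ``nearly pseudo-K\"{a}hler'' identity from the Remark to collapse the four terms.

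First, since the Levi--Civita connection is torsion-free, I would replace each bracket by $[U,V]=\nabla_U V-\nabla_V U$ and expand $\nabla_{JX}(JY)=(\nabla_{JX}J)Y+J\nabla_{JX}Y$ in each term of the definition of $N$. The connection terms cancel in pairs and one obtains the standard identity
$$
[JX,JY]-[X,Y]-J[JX,Y]-J[X,JY]=(\nabla_{JX}J)Y-(\nabla_{JY}J)X-J(\nabla_X J)Y+J(\nabla_Y J)X.
$$
Hence $N(X,Y)$ equals twice the right-hand side.

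Next I would invoke the identity $\nabla_{JX}J=-J\nabla_X J$ noted in the Remark (which itself follows from (\ref{nabla-J}) combined with the bulleted identity $(\mathbf{n}\times X)\times Y=-\mathbf{n}\times(X\times Y)-g(X,Y)\mathbf{n}$ for $X,Y\perp\mathbf{n}$, after verifying that the normal component of $(\mathbf{n}\times X)\times Y$ equals $-g(X,Y)\mathbf{n}$). Applying it to the first and second summands shows $(\nabla_{JX}J)Y=-J(\nabla_X J)Y$ and $(\nabla_{JY}J)X=-J(\nabla_Y J)X$, so the four-term sum reduces to $-2J(\nabla_X J)Y+2J(\nabla_Y J)X$. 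Thus
$$
N(X,Y)=-4J(\nabla_X J)Y+4J(\nabla_Y J)X.
$$

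Finally I would substitute (\ref{nabla-J}): $(\nabla_X J)Y=X\times Y-\omega(X,Y)\mathbf{n}$. Since $J=\mathbf{n}\times(\cdot)$ and $\mathbf{n}\times\mathbf{n}=0$, the normal correction is killed, leaving $J(\nabla_X J)Y=\mathbf{n}\times(X\times Y)$. By skew-symmetry of the cross product, $J(\nabla_Y J)X=-\mathbf{n}\times(X\times Y)$, so the two contributions reinforce and produce $N(X,Y)=-8\,\mathbf{n}\times(X\times Y)=-8\,J(X\times Y)$.

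The only delicate step is the near-pseudo-K\"{a}hler relation $\nabla_{JX}J=-J\nabla_X J$: one must carefully track the $\omega(X,Y)\mathbf{n}$ corrections and use the cross-product identity $\mathbf{n}\times(\mathbf{n}\times Z)=-Z+g(\mathbf{n},Z)\mathbf{n}$ for the unit-length normal, so that both sides of the desired equality land in the tangent space with matching signs. Everything else is substitution and the bilinearity/skew-symmetry of $\times$.
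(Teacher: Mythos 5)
Your proposal is correct, but it follows a different route than the paper's own proof of this theorem. You expand the Nijenhuis tensor through the torsion-free Levi--Civita connection, obtaining $N(X,Y)=2\bigl((\nabla_{JX}J)Y-(\nabla_{JY}J)X-J(\nabla_X J)Y+J(\nabla_Y J)X\bigr)$, then use the nearly-K\"ahler relation $\nabla_{JX}J=-J\nabla_XJ$ and the explicit formula $(\nabla_XJ)Y=X\times Y-\omega(X,Y)\mathbf{n}$ to collapse everything to $-8\,\mathbf{n}\times(X\times Y)$; each step checks out (in particular $(\nabla_{JX}J)Y=(\mathbf{n}\times X)\times Y+g(X,Y)\mathbf{n}=-\mathbf{n}\times(X\times Y)=-J(\nabla_XJ)Y$ for tangent $X,Y$, and $\mathbf{n}\times\mathbf{n}=0$ kills the normal correction). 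The paper instead invokes the Kobayashi--Nomizu identity $4g((\nabla_ZJ)X,Y)=6\,\mathbf{d}\Phi(Z,JX,JY)-6\,\mathbf{d}\Phi(Z,X,Y)+g(N(X,Y),JZ)$, converts $\mathbf{d}\Phi$ into $d\omega$ with the appropriate normalization, and uses $d\omega(X,Y,Z)=3g(X,Y\times Z)$ to solve for $g(N(X,Y),JZ)=-8g(\mathbf{n}\times(X\times Y),JZ)$. Your direct computation is essentially the method the paper itself uses later for the para-complex analogue (Theorem \ref{Th-4-NP}), and it has the advantage of being self-contained — no reliance on the cited formula or on the delicate $\mathbf{d}=d/3$ convention — while also producing $N$ as a vector rather than only its pairing against $JZ$; the paper's route is shorter once the Kobayashi--Nomizu formula is accepted and displays explicitly how $N$, $d\omega$ and $\nabla J$ are tied together. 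Either argument is a valid proof of the stated formula.
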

\begin{proof}
The book \cite{KN} gives the formula
$$
4g((\nabla_ZJ)X,Y) = 6\,\mathbf{d}\Phi(Z,JX,JY) -6\,\mathbf{d}\Phi(Z,X,Y)+ g(N(X,Y),JZ),
$$
where $\Phi(X,Y)= g(X,JY)$ is the fundamental form.
In our case $\omega(X,Y)= g(JX,Y) = -\Phi(X,Y)$.

In this work, we assume that exterior product and exterior differential are defined without normalizing constant. In particular, then $dx\wedge dy = dx\otimes dy - dy\otimes dx$ and $d\eta (X,Y) = X\eta(Y) -Y\eta(X) -\eta([X,Y])$. In \cite{KN} another definition of external multiplication is taken. The external differential $\mathbf{d}$ in \cite{KN} is taken with a coefficient 1/3: $\mathbf{d}\omega = d\omega/3$.
Taking this into account, we have in our case
$$
4g((\nabla_ZJ)X,Y) = -2d\omega(Z,JX,JY) +2d\omega(Z,X,Y)+ g(N(X,Y),JZ).
$$
Therefore $g(N(X,Y),JZ) = 4g((\nabla_ZJ)X,Y) + 2d\omega(Z,JX,JY) -2d\omega(Z,X,Y)$.
Then, taking into account that $d\omega(X,Y,Z) = 3g(X,Y\times Z)$, we have:
$$
g(N(X,Y),JZ) = 4g((\nabla_ZJ)X,Y) +2d\omega(Z,JX,JY) -2d\omega(Z,X,Y) =
$$
$$
=4g(Z\times X-2\omega(Z,X)\mathbf{n},Y)-2d\omega (X,Y,Z) -2d\omega (Z,X,Y) =
$$
$$
= 4g(Z\times X,Y) -12g(X,Y\times Z) = 4g(Z, X\times Y) -12g(X\times Y,Z) =
$$
$$
= -8g(\mathbf{n}\times (X\times Y), \mathbf{n}\times Z) = -8g(\mathbf{n}\times (X\times Y), JZ).
$$
\end{proof}

\begin{theorem}\label{Th-3}
The fundamental 2-form $\omega$ of the almost complex Cayley structure on $\mathbb{S}^{2,4}$ is proper for the Laplace operator: $\Delta\omega = 12\omega$.
\end{theorem}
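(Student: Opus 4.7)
\noindent The plan is to use the co-closedness $\delta\omega=0$ from the preceding remark, which collapses the Hodge Laplacian to $\Delta\omega=\delta d\omega$, and then to unwind $\delta d\omega$ using identities (1)--(2) of Theorem~\ref{Th-1} together with the duality $\ast_R\Omega=\Psi$, $\ast_R\Psi=\Omega$ of the two constant-coefficient ambient forms.

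First I would combine the formula $\delta=-\ast_S d\ast_S$ for the codifferential on $\mathbb{S}^{2,4}$ (derived in the remark from $n=6$, $\eta=4$) with Theorem~\ref{Th-1}(2), which gives $\ast_S d\omega=-3\,\iota_\mathbf{n}\Psi$, to obtain
\begin{equation*}
\Delta\omega \;=\; \delta d\omega \;=\; -\ast_S d(\ast_S d\omega) \;=\; 3\,\ast_S\bigl(d(\iota_\mathbf{n}\Psi)\bigr).
\end{equation*}
Since $\Psi$ has constant coefficients on $\R^{3,4}$ we have $d\Psi=0$, so Cartan's formula and the fact that $\mathbf{n}(x)=x$ is the Euler vector field (under which the degree-$4$ form $\Psi$ has Lie derivative $L_\mathbf{n}\Psi=4\Psi$) give $d(\iota_\mathbf{n}\Psi)=L_\mathbf{n}\Psi=4\Psi$. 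Restricting to the sphere and invoking Lemma~\ref{Lemma 4} in the case $k=4$, $\theta=\Psi$, I obtain
\begin{equation*}
\ast_S(\Psi|_S) \;=\; \iota_\mathbf{n}(\ast_R\Psi) \;=\; \iota_\mathbf{n}\Omega \;=\; \omega,
\end{equation*}
where the last identification is formula (1) of Theorem~\ref{Th-1}. Substituting back yields $\Delta\omega = 3\cdot 4\cdot \omega = 12\,\omega$.

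The argument is essentially a chase of how $\ast_S$, $d$, and $\iota_\mathbf{n}$ interact with the Euler field, so there is no conceptual obstacle; the only subtlety is sign bookkeeping, namely the $(-1)^k$ in Lemma~\ref{Lemma 4}, the overall minus in $\delta=-\ast_S d\ast_S$ which depends on the signature $(2,4)$ of the sphere, and the homogeneity weight $4$ in $L_\mathbf{n}\Psi$. Once these three signs are harmonized, no genuine calculation remains beyond substitution into identities already proven in Theorem~\ref{Th-1}.
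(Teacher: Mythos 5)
Your proposal is correct and follows essentially the same route as the paper's proof: reduce $\Delta\omega$ to $\delta d\omega$ via $\delta\omega=0$, express $\delta=-\ast_S d\ast_S$, use $\ast_S d\omega=-3\,\iota_\mathbf{n}\Psi$ (equivalently Lemma~\ref{Lemma 4} applied to $\Omega$), apply Cartan's formula with the Euler field to get $d\iota_\mathbf{n}\Psi=L_\mathbf{n}\Psi=4\Psi$, and finish with Lemma~\ref{Lemma 4} for $k=4$ giving $\ast_S(\Psi|_S)=\iota_\mathbf{n}\Omega=\omega$. The sign bookkeeping you flag is exactly what the paper tracks, and your conclusion $\Delta\omega=12\omega$ agrees.
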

\begin{proof}
The Laplacian acting on differential forms is defined by the formula $\Delta = d\delta + \delta d$, where $\delta$ is the codifferential.
It is well known that on the $k$-forms the codifferential $\delta$ has the expression: $\delta = (-1)^k \ast^{-1}d \ast = (-1)^{kn+n+1+\eta} \ast d \ast$, where $\eta$ is the number of "minuses" of the metric tensor $g$.
In our case, $\eta =4$ and $n =6$.
Therefore, $\delta = -\ast d \ast$.
For the form $\omega$ it is known that $\delta\omega =0$.
Therefore, it suffices to compute $\delta d\omega$.
We denote the exterior differential in the space $\R^7$ with the index: $d_R$.
We use the property $\ast_S(\theta|_S) = (-1)^k\iota_\mathbf{n}(\ast_R \theta)$, established in Lemma \ref{Lemma 4}, and the results of Theorem \ref{Th-1}.
$$
\delta d\omega = -\ast_Sd\ast_S(d\omega) = -3\ast_Sd\ast_S(\Omega|_S) =
$$
$$
= -3(-1)^3\ast_Sd(\iota_\mathbf{n}\ast_R\Omega) = 3\ast_S(d_R\iota_\mathbf{n}(\Psi))|_S = 3\ast_S(L_\mathbf{n}\Psi)|_S =
$$
$$
=12\ast_S(\Psi|_S) = 12\,\iota_\mathbf{n}\ast_R\Psi  =12\,\iota_\mathbf{n}\Omega = 12\,\omega.
$$
In these calculations, we used the Lie derivative $L_\mathbf{n} = d\cdot \iota_\mathbf{n} + \iota_\mathbf{n}\cdot d$ along the vector field $\mathbf{n}(x) = x$ on $\R^7\setminus\{0\}$.
The 4-form $\Psi$ with constant coefficients in $\R^7$ is closed, therefore $d\iota_\mathbf{n}\Psi =L_\mathbf{n}\Psi$. In addition, the field $\mathbf{n}(x) = x$ defines the dilatation $\R^7\setminus\{0\}$: $x\mapsto tx$. Therefore, for the 4-form $\Psi$ with constant coefficients, the equality $L_\mathbf{n}\Psi  = 4\Psi$ holds.
\end{proof}

\subsection{The complex structure on $\mathbb{S}^{2,4}$} \label{CS-on-S24}
We have shown that the almost complex Cayley structure on the pseudo sphere $\mathbb{S}^{2,4}$ is non-integrable.
In this subsection we show that there are integrable complex structures on the pseudosphere $\mathbb{S}^{2,4}$.
Consider the stereographic projection $\mathbb{S}^{2,4}$ from the origin of coordinates $\R^7$ to the part of the cylinder $\mathbb{S}^2\times D^4$, where $\mathbb{S}^2$ is the standard unit sphere in the space $\R^3$ with the coordinates $(x_1, x_2, x_3)$ and $D^4$ is the unit ball in the space $\R^4$ with the coordinates $(x_4, x_5, x_6, x_7)$, $x_4^2+ x_5^2+ x_6^2+ x_7^2 <1$. The line passing through the origin and the point $x = (x_1, x_2,\dots, x_7)$ of the pseudosphere $x_1^2+ x_2^2+ x_3^2 -x_4^2 -x_5^2 -x_6^2 -x_7^2 =1$, intersect the cylinder $\mathbb{S}^2\times D^4$  at the point $y$ with the coordinates (Fig. 1)
$$
y_i=\frac{x_i}{\sqrt{x_1^2+ x_2^2+ x_3^2}},\quad i=1,2,\dots, 7.
$$
It is clear that the first variables $(y_1, y_2, y_3)$ describe the two-dimensional sphere $\mathbb{S}^{2}$ in $\R^3$.
The remaining coordinates vary in the unit ball $D^4$ in the space $\R^4$. Indeed, let $r^2 =x_1^2+ x_2^2+ x_3^2$.
On the pseudosphere $r^2\geq 1$.
Then it follows from the pseudo sphere equation that
$$
\frac{x_4^2 +x_5^2 +x_6^2 +x_7^2}{x_1^2+ x_2^2+ x_3^2} =1-\frac{1}{x_1^2+ x_2^2+ x_3^2}, \quad \Rightarrow \quad 0\leq y_4^2 +y_5^2 +y_6^2 +y_7^2 = 1-\frac{1}{r^2}<1.
$$

Through each point $(y_4, y_5, y_6, y_7)\in D^4$, at a fixed point $(y_1, y_2, y_3)\in \mathbb{S}^2$, there passes a single straight line intersecting the pseudo sphere at the point $x_i=r\, y_i$, $i=1,2,\dots, 7$ at $r=1/\sqrt{1-y_4^2 -y_5^2 -y_6^2 -y_7^2}$ and vice versa, each straight line passing through the origin and the point $x\in \mathbb{S}^{2,4}$ intersects the ball $D^4$ for certain $(y_1, y_2, y_3)\in \mathbb{S}^{2}$.

\begin{figure}\centering
\includegraphics[scale=1.0]{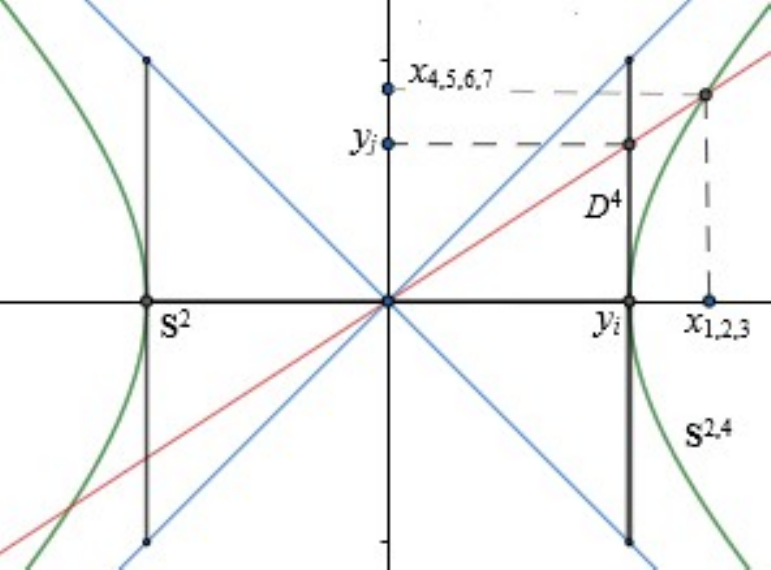}\\ %{Fig-1.bmp}
\caption{Stereographic projection} \label{Fg-1}
\end{figure}

Since the direct product $\mathbb{S}^2\times D^4$ admits complex structures, the diffeomorphism of the stereographic projection $f: \mathbb{S}^{2,4} \rightarrow \mathbb{S}^2\times D^4$ allows us to transfer the complex structures from $\mathbb{S}^2\times D^4$ to the sphere $\mathbb{S}^{2,4}$.
We note that $\mathbb{S}^2\times D^4$ is a pseudo-Riemannian signature (2,4).

\section{Almost para-complex structure on $\mathbb{S}^{3,3}$} \label{APCS-on-S33}
The results of this part are similar to those obtained for the sphere $\mathbb{S}^{2,4}$, but there are some differences.
Consider the sphere $\mathbb{S}^{3,3} =\mathbb{S}^{3,3}(i)$ of a purely imaginary unit radius in the space $\R^{3,4} = \Im(\mathbf{Ca'})$ of imaginary Cayley split-octaves.
The tangent planes to the sphere $T_x\mathbb{S}^{3,3}$, $\forall x\in \mathbb{S}^{3,3}$, are pseudo-Euclidean signatures (3.3).
Scalar squares of the normal vectors $\mathbf{n}(x) = x$ to the sphere $\mathbb{S}^{3,3}$ are negative, $g(\mathbf{n},\mathbf{n}) = -1$.
Vector cross multiplication of the tangent vectors $Y\in T_x\mathbb{S}^{3,3}$ by the normal vector $\mathbf{n}(x)$ takes the tangent space into itself: if $Y \in T_x\mathbb{S}^{3,3}$, that is, $g(Y,\mathbf{n}) = 0$, then
$$
g(\mathbf{n}\times Y,\mathbf{n}) = 0, \qquad \mathbf{n}\times Y \in T_x\mathbb{S}^{3,3}.
$$
It follows from the property of the vector cross product that
$$
\mathbf{n}\times (\mathbf{n}\times Y) = -g(\mathbf{n},\mathbf{n})Y +g(\mathbf{n},Y)\mathbf{n} = Y.
$$
This means that $(P_x)^2 = Id$.
The operator $P_x(X) = \mathbf{n}\times X$ is skew-symmetric, $g(\mathbf{n}\times X,Y) = -g(X,\mathbf{n}\times Y)$, but not orthogonal: $g(P_xX,P_xY) = g(\mathbf{n}\times X,\mathbf{n}\times Y) = -g(X,Y)$, $\forall  X,Y \in T_x\mathbb{S}^{3,3}$.
The operator $P_x$ map space-like vectors into time-like vectors.
The eigensubspaces of the operator $P_x$ are three-dimensional isotropic subspaces tangent to $\mathbb{S}^{3,3}$.
Therefore, the operator $P_x(X) = \mathbf{n}(x)\times X$ defines a para-complex structure on $T_x\mathbb{S}^{3,3}$.
The $P_x$ is defined at each point $x\in \mathbb{S}^{3,3}$.
Hence we get that the unit pseudo-Riemann sphere $\mathbb{S}^{3,3}$ has a natural almost para-complex structure $P$, which we call an \emph{almost para-complex Cayley} structure:
$$
P_x: T_x \mathbb{S}^{3,3} \rightarrow T_x \mathbb{S}^{2,4},\quad P_x(Y) = \mathbf{n}(x)\times Y, \quad \forall x\in \mathbb{S}^{3,3}, \forall  Y \in T_x\mathbb{S}^{3,3}.
$$

Let $\omega(X,Y) = g(PX,Y)$ be the fundamental 2-form corresponding to $P$ and let $\Omega(X,Y,Z) = g(X\times Y,Z)$.
Then it is easy to see that
$$
\omega(X,Y) = g(\mathbf{n}\times X,Y) = \Omega(\mathbf{n},X,Y) = g(\mathbf{n},X\times Y).
$$
Recall that an exterior 4-form $\Psi = \ast \Omega$ is defined on the space $\R^{3,4} = \Im(\mathbf{Ca'})$.
Let $\ast_S$ and $\ast_R$ be the Hodge operators on the sphere $\mathbb{S}^{3,3}$ and on $\R^{3,4}$, respectively.
The symbol $\theta|_S$ we denote the restriction of the differential form $\theta$ on $\R^{3,4}$ to the submanifold $\mathbb{S}^{3,3}$, and by the symbol $\theta_n$ is the normal component of $\theta$ along $\mathbb{S}^{3,3}$, then $\theta=\theta|_S + \mathbf{n}^*\wedge \widetilde{\theta}$.

For the almost para-complex Cayley structure $P$, all results of Section \ref{ACS-on-S24} for almost complex Cayley structures $J$, are completely similarly proved:
\begin{enumerate}
  \item The 3-form $\Omega$ under its restriction to a sphere has the property:
$$
\Omega(PX,Y,Z) = \Omega(X,PY,Z) = \Omega(X,Y, PZ).
$$
  \item For any $X,Y,Z \in T_x\mathbb{S}^{3,3}$ we have the equality:
$$
\iota_\mathbf{n}\Psi(X,Y,Z) = -\Omega(PX,Y,Z).
$$
  \item Let $\theta$ be a $k$-form on $\R^{3,4}$ and let $\theta|_S$ be its restriction to the tangent space $T_x\mathbb{S}^{3,3} \subset \R^{3,4}$. Then
$$
\ast_R(\theta|_S) = (-1)^k\mathbf{n}^*\wedge \ast_S(\theta|_S), \quad      \ast_S(\theta|_S) = (-1)^k\iota_\mathbf{n}(\ast_R\theta).
$$
  \item The fundamental form $\omega$ of an almost para-complex Cayley structure $P$ on $\mathbb{S}^{3,3}$ and its exterior differential $d\omega$  have the following properties:
$$
	\omega = \iota_\mathbf{n}\Omega, \quad  d\omega=3\Omega|_S, \quad
	\ast_S\omega = \Psi|_S, \qquad \ast_Sd\omega = -3\iota_\mathbf{n}\Psi, \quad \omega\wedge d\omega = 0.
$$
$$
 	d\omega(X,Y,Z) = 3\iota_\mathbf{n}\Psi(JX,Y,Z), \quad d\omega(X,Y,Z) = 3g(X,Y\times Z)
$$
$$ 		d\omega(PX,Y,Z) = d\omega(X,PY,Z) = d\omega(X,Y,PZ), \quad
 		d\omega(X,PY,PZ) = -d\omega(X,Y,Z).
$$
\end{enumerate}

For the fundamental form $\omega$ of an almost para-complex Cayley structure on $\mathbb{S}^{3,3}$, the 3-form $d\omega$ has the form $d\omega=3\Omega|_S$, where $\Omega = \omega^{123} -\omega^{145} +\omega^{167} -\omega^{246} -\omega^{257} -\omega^{347} +\omega^{356}$.
We find the Hitchin operator \cite{Hitchin} $K_{d\omega}$ for the 3-form $d\omega$ on the sphere $\mathbb{S}^{3,3}$.
First consider the point $x = e_4 \in \mathbb{S}^{3,3}$.
The tangent space $T_x\mathbb{S}^{3,3}$ has an orthonormal basis and the volume form $\mu = \omega^{123567}$.
The 2-form $d\omega$ on $T_x\mathbb{S}^{3,3}$ has the form
$$
d\omega_x=3(\omega^{123} +\omega^{167} -\omega^{257} +\omega^{356}).
$$
The 2-form $d\omega_x$ is invariant at the action on $T_x\mathbb{S}^{3,3}$ of the isotropy subgroup $SL(3,\R)\subset G_2^*$. Therefore, it is sufficient to calculate $K_{d\omega}$ on one vector, for example, $K_{d\omega}(e_1)$.
$$
\iota_{e_1}d\omega_x\wedge d\omega_x =9(\omega^{23} +\omega^{67})\wedge (\omega^{123} +\omega^{167} -\omega^{257} +\omega^{356})=
$$
$$
=9(\omega^{12367} +\omega^{12367})= 18\omega^{12367} = -18\iota_{e_5}\omega^{123567}=-18\iota_{e_5}\mu.
$$
Therefore, $K_{d\omega}(e_1) = -18e_5 = 18e_4\times e_1 = 18P(e_1)$.
It follows that $I_{d\omega} = P_x$.

From the invariance of the almost para-complex Cayley structure $P$ and the 3-form $\Omega$ with respect to the group $G_2^*$, acting transitively on $\mathbb{S}^{3,3}$, we obtain that the equality $I_{d\omega} = P_x$ holds at all points of the pseudo-sphere $\mathbb{S}^{3,3}$.

\begin{conclusion}
The 3-form $d\omega$ on $\mathbb{S}^{3,3}$ is everywhere nondegenerate and defines an almost para-complex structure on $\mathbb{S}^{3,3}$, which coincides with the almost complex structure of Cayley $P$.
\end{conclusion}

Let's calculate the Nijenhuis tensor
$$
N_P(X,Y) = 2([X,Y]+[PX,PY] -P[PX,Y] -P[X,PY])
$$
of the almost para-complex structure $P$ \cite{Aleks-09}.
To do this, we find the covariant derivative of the tensor $P$. Since $\mathbf{n}(x) = x$, then for any tangent vector $X\in T_x\mathbb{S}^{3,3}$ we have $D\mathbf{n}_x(X) = X$.
We assume that the tangent vectors $X,Y\in T_x\mathbb{S}^{3,3}$ are continued to the space $\R^7$ as constant vector fields and $\mathbf{n}(x) =x$ is also defined on $\R^7\setminus\{0\}$.

Then we have the equalities $(\nabla_XP)Y  =\mathrm{pr}_x(D_X(PY)) = \mathrm{pr}_x(D_X(\mathbf{n}\times Y)) = \mathrm{pr}_x(X\times Y)$, where $\mathrm{pr}_x(Z) = Z+g(Z,\mathbf{n})\mathbf{n}$ is the projection orthogonal of $\R^{3,4}$ on the tangent space $T_x\mathbb{S}^{3,3}$, here we take into account that $g(\mathbf{n},\mathbf{n}) =-1$. We get,
\begin{equation} \label{nabla-P}
 		(\nabla_XP)Y = X\times Y +g(\mathbf{n}, X\times Y)\mathbf{n} = X\times Y +\omega(X,Y)\mathbf{n}.
\end{equation}

\begin{remark}
The skew-symmetry of the operator $\nabla_XP$ follows immediately from the resulting expression (\ref{nabla-P}) and the property $\nabla_X(P)X =0$ of the nearly pseudo-para-K\"{a}hler property of the manifold $\mathbb{S}^{3,3}$ and the equality $\nabla_{PX}P =-P\nabla_XP$.
\end{remark}

\begin{theorem}\label{Th-4-NP}
The Nijenhuis tensor $N_P(X,Y)$ of the almost para-complex Cayley structure $P$ on $\mathbb{S}^{3,3}$ has the form
$$
N_P(X,Y) = -8\,\mathbf{n}\times (X\times Y)=-8\,P(X\times Y).
$$
\end{theorem}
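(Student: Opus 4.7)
The plan is to follow the proof of Theorem \ref{Th-2} step by step, replacing $J$ by $P$ and carefully tracking the sign changes that come from four sources: (a) $P^2=Id$ instead of $J^2=-Id$; (b) $g(\mathbf{n},\mathbf{n})=-1$ instead of $+1$, which flips the sign in the projection and in the identity $g(\mathbf{n}\times X,\mathbf{n}\times Y) = -g(X,Y)$ for $X,Y$ tangent; (c) the formula (\ref{nabla-P}) has a $+\omega(X,Y)\mathbf{n}$ term where (\ref{nabla-J}) had $-\omega(X,Y)\mathbf{n}$; and (d) among the properties of $d\omega$ one has $d\omega(X,PY,PZ)=-d\omega(X,Y,Z)$ in the para-complex case, whereas in the complex case $d\omega(X,JY,JZ)=d\omega(X,Y,Z)$.

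First I would invoke the para-complex analog of the Kobayashi--Nomizu identity used in the proof of Theorem \ref{Th-2}, namely a formula of the form
$$
4g((\nabla_Z P)X, Y) \;=\; -2\,d\omega(Z,PX,PY) -2\,d\omega(Z,X,Y) + g(N_P(X,Y), PZ),
$$
obtained from the standard expression of $N_P$ in terms of $\nabla P$ on a manifold with a torsion-free connection, then re-expressed via $d\omega$ by the same bookkeeping as in \cite{KN} (with the factor $1/3$ in the external differential convention taken into account). Second, I substitute (\ref{nabla-P}): since $g(\mathbf{n},Y)=0$ for $Y\in T_x\mathbb{S}^{3,3}$, the term $\omega(Z,X)\mathbf{n}$ drops out, leaving $g((\nabla_Z P)X,Y)=g(Z\times X,Y)$. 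Third, using $d\omega(U,V,W)=3g(U,V\times W)$ on both $d\omega$ terms and exploiting the identity $d\omega(Z,PX,PY)=-d\omega(Z,X,Y)$, the two $d\omega$ contributions combine into a single multiple of $g(X,Y\times Z)$. After using the skew-symmetry of $\Omega$ to rewrite everything as a multiple of $g(X\times Y,Z)$, the calculation yields
$$
g(N_P(X,Y), PZ) \;=\; -8\,g(X\times Y, Z).
$$

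To extract $N_P(X,Y)$ from this inner-product equality, I would use the identity $g(\mathbf{n}\times U, \mathbf{n}\times V) = -g(U,V)$ valid for $U,V$ tangent to $\mathbb{S}^{3,3}$ (a consequence of $g(\mathbf{n},\mathbf{n})=-1$ listed in Section \ref{vector-cross-product}). Applying this with $U = X\times Y$ and $V = Z$ gives $g(X\times Y, Z) = -g(\mathbf{n}\times(X\times Y), \mathbf{n}\times Z) = -g(P(X\times Y), PZ)$, so that
$$
g(N_P(X,Y), PZ) \;=\; 8\,g(P(X\times Y), PZ) \;=\; -8\,g(X\times Y, Z),
$$
and nondegeneracy of $g$ on $T_x\mathbb{S}^{3,3}$ combined with the fact that $P$ is a bijection yields $N_P(X,Y) = -8\,\mathbf{n}\times(X\times Y) = -8\,P(X\times Y)$. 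The principal obstacle is sign bookkeeping: the para-complex Koba\-yashi--Nomizu-type identity requires the correct signs in front of the two $d\omega$ terms, and one must verify that the two effects coming from $d\omega(Z,PX,PY)=-d\omega(Z,X,Y)$ and from $g(\mathbf{n},\mathbf{n})=-1$ conspire to reproduce exactly the same coefficient $-8$ as in the Hermitian case rather than changing it to $+8$; once that is checked the rest is a mechanical reduction parallel to Theorem \ref{Th-2}.
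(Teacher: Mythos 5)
Your route (first prove an identity of the form $g(N_P(X,Y),PZ)=c\,g(X\times Y,Z)$, then invert $P$) is not the paper's route, and as written it has a genuine gap compounded by compensating sign errors. The key ingredient, the ``para-complex analogue of the Kobayashi--Nomizu identity,'' is asserted rather than proved: the formula in \cite{KN} is established for almost Hermitian structures, where $J^2=-\mathrm{Id}$ and $g(JX,JY)=g(X,Y)$, while here $P^2=\mathrm{Id}$ and $g(PX,PY)=-g(X,Y)$, so its coefficients must be re-derived, and the version you postulate is in fact false. You can test it with data you yourself use: substituting $g((\nabla_ZP)X,Y)=g(Z\times X,Y)$ and $d\omega=3\Omega|_S$ together with the relation $d\omega(Z,PX,PY)=-d\omega(Z,X,Y)$ that you invoke, the two $d\omega$ terms in your identity (both with coefficient $-2$) cancel instead of combining, and you would get $g(N_P(X,Y),PZ)=4\,g(X\times Y,Z)$, not $-8\,g(X\times Y,Z)$; no choice of sign for $d\omega(Z,PX,PY)$ makes your identity reproduce the correct coefficient. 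Moreover the intermediate you claim is itself wrong in sign: since $g(\mathbf{n}\times U,\mathbf{n}\times Z)=-g(U,Z)$ for tangent $Z$, the theorem is equivalent to $g(N_P(X,Y),PZ)=+8\,g(X\times Y,Z)$. Finally, your extraction step contains a second slip that hides the first: from your own display $g(N_P(X,Y),PZ)=8\,g(P(X\times Y),PZ)$, nondegeneracy of $g$ and surjectivity of $P$ on $T_x\mathbb{S}^{3,3}$ give $N_P(X,Y)=+8\,P(X\times Y)$, not $-8\,P(X\times Y)$. Two cancelling sign errors landing on the stated formula do not constitute a proof.

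The paper avoids any KN-type formula here. Using the torsion-free Levi-Civita connection and $P^2=\mathrm{Id}$ it writes
$$
\tfrac12 N_P(X,Y)=(\nabla_{PX}P)Y-(\nabla_{PY}P)X+P(\nabla_YP)X-P(\nabla_XP)Y,
$$
then substitutes (\ref{nabla-P}) and the identity $(\mathbf{n}\times X)\times Y=-\mathbf{n}\times(X\times Y)-g(X,Y)\mathbf{n}$ valid for $X,Y\perp\mathbf{n}$; each of the four terms reduces to $-\mathbf{n}\times(X\times Y)$ modulo normal terms that cancel, giving $-4\,\mathbf{n}\times(X\times Y)$ directly. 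If you wish to keep your strategy, you must first derive the para-Hermitian analogue of the Kobayashi--Nomizu identity from scratch, tracking the sign changes coming from $P^2=\mathrm{Id}$ and $g(PX,PY)=-g(X,Y)$, and then verify that the reduction yields $g(N_P(X,Y),PZ)=+8\,g(X\times Y,Z)$ before inverting $P$; the simpler repair is to switch to the direct computation just described.
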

\begin{proof}
Direct calculation using the formula $(\nabla_XP)Y = X\times Y +\omega(X,Y)\mathbf{n}$ and the equality $(\mathbf{n}\times X)\times Y = -\mathbf{n}\times (X\times Y) -g(X,Y)\mathbf{n}$ for $X,Y \perp \mathbf{n}$:
$$
N_P(X,Y)/2 = [X,Y] + [PX,PY] -P[PX,Y] -P[X,PY]=
$$
$$
=\nabla_X(Y) -\nabla_Y(X) +\nabla_{PX}(PY) -\nabla_{PY}(PX) -P\nabla_{PX}(Y) + P\nabla_Y(PX) -P\nabla_X(PY) + P\nabla_{PY}(X) =
$$
$$
=  \nabla_{PX}(P)Y -\nabla_{PY}(P)X + P\nabla_Y(P)X -P\nabla_X(P)Y =
$$
$$
=(PX)\times Y +\omega(PX,Y)\mathbf{n} -((PY)\times X +\omega(PY,X)\mathbf{n}) +P(Y\times X +\omega(Y,X)\mathbf{n}) -P(X\times Y +\omega(X,Y)\mathbf{n}) =
$$
$$
= (\mathbf{n}\times X)\times Y -(\mathbf{n}\times Y)\times X + \mathbf{n}\times (Y\times X) -\mathbf{n}\times (X\times Y) = -4 \mathbf{n}\times (X\times Y)
$$
\end{proof}

\begin{theorem}\label{Th-5-Delta-P}
The fundamental 2-form $\omega$ of the almost para-complex Cayley structure on $\mathbb{S}^{3,3}$ is proper for the Laplace operator: $\Delta\omega = -12\omega$.
\end{theorem}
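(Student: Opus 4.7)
The plan is to imitate the proof of Theorem \ref{Th-3} line-by-line, with careful attention to the sign corrections that arise from the different signature of the ambient normal direction. Since $\Delta = d\delta + \delta d$, and I will show that $\delta\omega = 0$, it suffices to compute $\delta d\omega$. All of the structural ingredients needed — namely $\omega = \iota_\mathbf{n}\Omega$, $d\omega = 3\Omega|_S$, $\ast_S\omega = \Psi|_S$, and $\ast_S(\theta|_S) = (-1)^k\iota_\mathbf{n}(\ast_R\theta)$ — are already available from items 3 and 4 of the list of properties transferred to $\mathbb{S}^{3,3}$ in Section \ref{APCS-on-S33}.

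The crucial sign change comes from the general codifferential formula $\delta = (-1)^{kn+n+1+\eta}\ast d\ast$. For $\mathbb{S}^{3,3}$ we have $n = 6$ and the number of negative eigenvalues of the induced metric is $\eta = 3$ (instead of $\eta = 4$ for $\mathbb{S}^{2,4}$), so on both 2-forms and 3-forms one obtains $\delta = +\ast_S d\ast_S$. First I would verify $\delta\omega = 0$ directly: $\delta\omega = \ast_S d(\Psi|_S) = \ast_S(d\Psi)|_S = 0$, using that the exterior differential commutes with restriction to a submanifold and that the 4-form $\Psi$ on $\R^7$ has constant coefficients, hence is closed.

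Then I would compute $\delta d\omega$ via
\[
\delta d\omega \;=\; 3\,\ast_S d\ast_S(\Omega|_S) \;=\; 3\,\ast_S d\bigl((-1)^3\,\iota_\mathbf{n}\Psi\bigr) \;=\; -3\,\ast_S\bigl((d\,\iota_\mathbf{n}\Psi)\big|_S\bigr).
\]
Because $d\Psi = 0$, Cartan's formula reduces $d\,\iota_\mathbf{n}\Psi$ to the Lie derivative $L_\mathbf{n}\Psi$, and since $\Psi$ is a homogeneous 4-form with constant coefficients and $\mathbf{n}(x) = x$ is the infinitesimal generator of radial dilation on $\R^7\setminus\{0\}$, one has $L_\mathbf{n}\Psi = 4\Psi$. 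Finally, applying $\ast_S(\Psi|_S) = (-1)^4\iota_\mathbf{n}(\ast_R\Psi) = \iota_\mathbf{n}\Omega = \omega$ yields $\delta d\omega = -12\,\omega$, and therefore $\Delta\omega = -12\,\omega$.

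The main obstacle — or rather the only genuine care point, since no new identities are required — is the bookkeeping of signs: the factor $(-1)^k$ in Lemma \ref{Lemma 4} (transferred to $\mathbb{S}^{3,3}$), the $(-1)^\eta$ in the codifferential, and the implicit contribution of $g(\mathbf{n},\mathbf{n}) = -1$ that already reshaped the Hodge-duality statements in Section \ref{APCS-on-S33}. These conspire to flip exactly one overall sign relative to the $\mathbb{S}^{2,4}$ calculation, converting the eigenvalue $+12$ of Theorem \ref{Th-3} into $-12$ here.
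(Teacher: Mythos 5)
Your proposal is correct and is essentially the paper's own proof: the paper simply declares that the argument of Theorem \ref{Th-3} repeats verbatim with the single change $\delta = +\ast d\ast$ (coming from $\eta = 3$, $n = 6$), and your computation $\delta d\omega = 3\ast_S d\ast_S(\Omega|_S) = -3\ast_S(L_\mathbf{n}\Psi)|_S = -12\,\iota_\mathbf{n}(\ast_R\Psi) = -12\,\omega$, together with $\delta\omega = \ast_S d(\Psi|_S) = 0$, is exactly that repetition spelled out using the transferred identities of Section \ref{APCS-on-S33}.
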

\begin{proof}
Completely repeats the proof of Theorem \ref{Th-3} with one difference: $\delta = +\ast d \ast$.
Indeed, on the $k$-forms the codifferential $\delta$ has the expression: $\delta = (-1)^k \ast^{-1}d \ast = (-1)^{kn+n+1+\eta} \ast d \ast$, where $\eta$ is the number of "minuses" of the metric tensor $g$.
In our case, $\eta =3$ and $n =6$.
Therefore, $\delta = +\ast d \ast$.
\end{proof}

\subsection{The para-complex structure on $\mathbb{S}^{3,3}$} \label{PCS-on-S33}
We have shown that the almost para-complex Cayley structure on the pseudo sphere $\mathbb{S}^{3,3}$ of a purely imaginary unit radius is non-integrable.
In this subsection we show that there are integrable para-complex structures on the pseudosphere $\mathbb{S}^{3,3}$.
This follows from the fact that $\mathbb{S}^{3,3}$ is diffeomorphic to the direct product of three-dimensional manifolds.

Consider the stereographic projection $\mathbb{S}^{3,3}$ from the origin of coordinates $\R^7$ to the part of the cylinder $D^3\times \mathbb{S}^3$, where $\mathbb{S}^3$ is the standard unit sphere in the space $\R^4$ with the coordinates $(x_4, x_5, x_6, x_7)$ and $D^3$ is the unit ball in the space $\R^3$ with the coordinates $(x_1, x_2, x_3)$, $x_1^2+ x_2^2+ x_3^2 <1$.
The line passing through the origin and the point $x = (x_1, x_2,\dots, x_7)$ of the pseudosphere $-x_1^2- x_2^2- x_3^2 +x_4^2 +x_5^2 +x_6^2 +x_7^2 =1$, intersect the cylinder $D^3\times \mathbb{S}^3$  at the point $y$ with the coordinates
$$
y_i=\frac{x_i}{\sqrt{x_4^2+ x_5^2+ x_6^2+ x_7^2}},\quad i=1,2,\dots, 7.
$$
It is clear that the last variables $(y_4, y_5, y_6, y_7)$ describe the three-dimensional sphere $\mathbb{S}^{3}$.
The remaining coordinates vary in the unit ball $D^3$ in the space $\R^3$. Indeed, let $\rho^2 =x_4^2+ x_5^2+ x_6^2+ x_7^2$.
On the pseudosphere $\rho^2\geq 1$.
Then it follows from the pseudo sphere equation that
$$
\frac{x_1^2 +x_2^2 +x_3^2}{x_4^2+ x_5^2+ x_6^2+ x_7^2} =1-\frac{1}{x_4^2+ x_5^2+ x_6^2+ x_7^2}, \quad \Rightarrow \quad 0\leq y_1^2 +y_2^2 +y_3^2 = 1-\frac{1}{\rho^2}<1.
$$
Through each point $(y_1, y_2, y_3)\in D^3$, at a fixed point $(y_4, y_5, y_6, y_7)\in \mathbb{S}^3$, there passes a single straight line intersecting the pseudo sphere at the point $x_i=\rho\, y_i$, $i=1,2,\dots, 7$ with $\rho=1/\sqrt{1-y_1^2 -y_2^2 -y_3^2}$ and vice versa, each straight line passing through the origin and the point $x\in \mathbb{S}^{3,3}$ intersects the ball $D^3$ for certain $(y_1, y_2, y_3)$.

Since the direct product $D^3\times \mathbb{S}^3$ admits para-complex structures, the diffeomorphism of the stereographic projection $f: \mathbb{S}^{3,3} \rightarrow D^3\times \mathbb{S}^3$ allows us to transfer the para-complex structures from the direct product of three-dimensional manifolds $D^3\times \mathbb{S}^3$ to the sphere $\mathbb{S}^{3,3}$.
We note that by construction, $D^3\times \mathbb{S}^3$ is a pseudo-Riemannian signature (3.3).

\section{Almost complex and almost para-complex structures on the Cayley algebra} \label{ACS-Cayley}
The space $\R^{7}$ of purely imaginary numbers $X = x_1e_1 + x_2e_2 +\dots + x_7e_7$ is divided by the cone $N_0(X)=x_1^2+ x_2^2+ x_3^2 -x_4^2 -x_5^2 -x_6^2 -x_7^2 =0$  into two open sets of $\R^{7+}$ and $\R^{7-}$  -- elements for which $N_0(X) >0$ and $N_0(X) <0$, respectively.
We consider the direct products of these sets with the real line $\R_0$ of the Cayley algebra:
$$
\R^{8+} = \R_0\times \R^{7+} \quad \text{ and }\quad  \R^{8-} = \R_0\times \R^{7-}.
$$
In this section we define a non-integrable almost complex structure on an open set $\R^{8+}$ and a non-integrable para-complex structure on $\R^{8-}$.

\subsection{Almost complex Cayley structure}
Consider the space $\R^{8+} = \R_0\times \R^{7+}$ and define a non-integrable almost complex structure $J$ on it.
On the space $\R_0$, there is defined a unit vector field $\mathbf{n}_1 = e_0 =1$.
We define the unit vector field $\mathbf{n}_2(u)$ on $\R^{8+}$ by the formula  $\mathbf{n}_2(u)=\frac{U}{||U||}$, where $u =u_0+U\in \R^{8+} = \R_0\times \R^{7+}$ and $||U||=\sqrt{N_0(U)}>0$.
Note that $\langle \mathbf{n}_2, \mathbf{n}_2 \rangle = 1$, and $\mathbf{n}_2\mathbf{n}_2 = -1$.
We define the operator $J: \R^8 \rightarrow \R^8$ at the point $u =u_0+U\in \R^{8+} = \R_0\times \R^{7+}$ by formula
$$
J(y)= \mathbf{n}_2\cdot y = \frac{U}{||U||}\cdot y.
$$
Since $\mathbf{n}_2\mathbf{n}_2 = -1$, then $J^2 = -Id$ and we obtain an almost complex structure at the point $u =u_0+U\in \R^{8+}$.

The fundamental form $\omega(X,Y) = g(JX,Y)$ corresponding to $J$ can be easily calculated taking into account the property $\mathbf{n}_2\times Y = \mathbf{n}_2\cdot Y+ \langle \mathbf{n}_2, Y \rangle$:
$$
\omega (y,z) =\langle J(y), z\rangle = \langle \mathbf{n}_2\cdot y,z\rangle = \langle \mathbf{n}_2\cdot (y_0+Y),z_0+Z \rangle = y_0\langle \mathbf{n}_2,z_0+Z \rangle + \langle \mathbf{n}_2\cdot Y,z_0+Z \rangle=
 $$
$$
= y_0\langle \mathbf{n}_2,z \rangle + \langle \mathbf{n}_2\times Y,Z \rangle -z_0\langle \mathbf{n}_2,y \rangle = \mathbf{n}_1^*\wedge \mathbf{n}_2^*(y,z) + \langle \mathbf{n}_2\times Y,Z \rangle\, .
 $$

It is also easy to calculate the derivative $D_x(J)$ of the tensor field $J$ in the direction of the vector $x = x_0 +X \in \R^{8+}$.
Since the space $\R^{8+}$ is an open subset of $\R^8$, we can consider vectors $x, y$ with parallel vector fields on $\R^{8+}$, and then $D_x(J)(y) = D_x(\mathbf{n}_2)y$.

The vector field $\mathbf{n}_2(u)$ does not depend on the variable $u_0$, so its derivative in the direction of the vector $x = e_0$ is zero.
Therefore, it is sufficient to calculate $D_x(\mathbf{n}_2)$ only in the direction of the vector $x = X\in \R^{7+}$.
This derivative is found by simple differentiation of the unit vector field $\mathbf{n}_2(u)$.
If $u=u_0+U \in \R^{8+}$, then $\mathbf{n}_2(u)=\frac{U}{||U||}$.
Therefore, from the formula $D_x(J)(y) = D_x(\mathbf{n}_2)y$ we obtain the following expression for the derivative of the almost complex Cayley structure $J$ on $\R^{8+}$ at the point $u=u_0+U$ in the direction of the vector $x =x_0 +X$:
$$
D_x(J)(y) = (D_X\mathbf{n}_2)y=\frac{1}{||U||}\left(X-\langle X,\mathbf{n}_2(u) \rangle \mathbf{n}_2(u) \right)\cdot y
$$

Using this expression, we can easily calculate (like in Theorem \ref{Th-4-NP}) the Nijenhuis tensor $N(x,y)$ for the vectors $x, y$ orthogonal to $\mathbf{n}_2(u)$.
We compute the exterior differential of the 2-form $\omega$ at the point $u=u_0+U$ using the obtained formula for $D_x(J)$:
$$
d\omega(x,y,z) =x\omega(y,z)+y\omega(z,x)+z\omega(x,y)=
$$
$$
=\langle D_x(J)y,z\rangle +\langle D_y(J)z,x\rangle+\langle D_z(J)x,y\rangle=
$$
$$
=\frac{1}{||U||}\left(3\langle X\times Y,Z\rangle
-\langle X,\mathbf{n}_2(u)\rangle\omega(Y,Z)
-\langle Y,\mathbf{n}_2(u)\rangle\omega(Z,X)
-\langle Z,\mathbf{n}_2(u)\rangle\omega(X,Y) \right) =
$$
$$
=\frac{1}{||U||}\left(3\Omega(X,Y,Z)
-(\mathbf{n}_2^*\wedge\omega)(X,Y,Z) \right).
$$

\subsection{Almost para-complex Cayley structure}
Consider the space $\R^{8-} = \R_0\times \R^{7-}$ and define a non-integrable almost para-complex structure $P$ on it.
On the line $\R_0$ we define a unit vector field $\mathbf{n}_1 = e_0 =1$.
We define the unit vector field $\mathbf{n}_2(u)$ on $\R^{8-}$ by the formula  $\mathbf{n}_2(u)=\frac{U}{||U||}$, where $u =u_0+U\in \R^{8-} = \R_0\times \R^{7-}$ and $||U||=\sqrt{-N_0(U)}>0$.
Note that $\langle \mathbf{n}_2, \mathbf{n}_2 \rangle = -1$, and $\mathbf{n}_2\mathbf{n}_2 = +1$.
We define the operator $P: \R^8 \rightarrow \R^8$ at the point $u =u_0+U\in \R^{8-} = \R_0\times \R^{7-}$ by the formula
$$
J(y)= \mathbf{n}_2\cdot y = \frac{U}{||U||}\cdot y.
$$
It is easy to see that $P^2 = Id$.
Therefore, we obtain a non-integrable almost para-complex structure on $\R^{8-}$.
In a completely analogous way, we compute $D_x(P)$, the fundamental form $\omega(X,Y) = g(PX,Y)$ associated with $P$ and its exterior differential at the point $u = u_0 + U$.
They have exactly the same form as in the case of an almost complex structure on $\R^{8+}$:
$$
\omega (y,z) =\mathbf{n}_1^*\wedge \mathbf{n}_2^*(y,z) + \langle \mathbf{n}_2\times Y,Z \rangle ,
$$
$$
d\omega(x,y,z) =
\frac{1}{||U||}\left(3\Omega(X,Y,Z)
-(\mathbf{n}_2^*\wedge\omega)(X,Y,Z) \right).
$$

\end{document}